\newcommand{\eps}{\varepsilon}
\newcommand{\norm}[1]{\|#1\|}
\newcommand{\law}{\mathcal{L}}
\newcommand{\EV}{\mathrm{E}}
\renewcommand{\Pr}{\mathrm{P}}
\newcommand{\dm}{\mathrm{d}m}
\newcommand{\ds}{\mathrm{d}s}
\newcommand{\EE}{\mathbb{E}}
\newcommand{\RR}{\mathbb{R}}
\newcommand{\WW}{\mathbb{W}}
\newcommand{\ZZ}{\mathbb{Z}}
\newcommand{\NN}{\mathbb{N}}
\newcommand{\M}{\mathcal{M}}
\newcommand{\BFTC}{\mbox{\sc bftc}}
\newcommand{\1}{\boldsymbol{1}}
\numberwithin{equation}{section}
\shorttitle{Markov tail chains}
\begin{document}

\title{Markov tail chains}

\authorone[University of Hamburg]{A. Janssen}
\addressone{University of Hamburg, Department of Mathematics, Bundesstr.\ 55, 20146 Hamburg, Germany}
\emailone{anja.janssen@math.uni-hamburg.de}
\authortwo[Universit\'e catholique de Louvain]{J. Segers}
\addresstwo{Universit\'e catholique de Louvain, Institut de statistique, Voie du Roman Pays 20, B-1348 Louvain-la-Neuve, Belgium}
\emailtwo{johan.segers@uclouvain.be}

\begin{abstract}
The extremes of a univariate Markov chain with regulary varying stationary marginal distribution and asymptotically linear behavior are known to exhibit a multiplicative random walk structure called the tail chain. In this paper, we extend this fact to Markov chains with multivariate regularly varying marginal distribution in $\mathbb{R}^d$. We analyze both the forward and the backward tail process and show that they mutually determine each other through a kind of adjoint relation. In a broader setting, it will be seen that even for non-Markovian underlying processes a Markovian forward tail chain always implies that the backward tail chain is Markovian as well. We analyze the resulting class of limiting processes in detail. Applications of the theory yield the asymptotic distribution of both the past and the future of univariate and multivariate stochastic difference equations conditioned on an extreme event.
\end{abstract}

\ams{60G70; 60J05}{60G10; 60H25; 62P05}

\keywords{autoregressive conditional heteroskedasticity; extreme value distribution;
(multivariate) Markov chain; multivariate regular variation; random walk; stochastic difference equation; tail chain; tail-switching potential}

\section{Introduction}
\label{S:intro}

Consider a discrete-time, $\mathbb{R}^d$-valued random process $\{ X_t : t = 0, 1, 2, \ldots \}$ defined by the recursive equation
\begin{equation}
\label{E:MC:1}
	X_t = \Phi ( X_{t-1}, \eps_t ), \qquad t = 1, 2, \ldots,
\end{equation}
where
\vspace{0.2cm}
\begin{equation}
\label{E:MC:2}
\mbox{\begin{minipage}[h]{0.90\textwidth}
\begin{itemize}
\item[(i)] $\eps_1, \eps_2, \ldots$ are independent and identically distributed random elements of a measurable space $(\mathbb{E}, \mathcal{E})$ and independent of $X_0$;
\item[(ii)] $\Phi$ is a measurable function from $\RR^d \times \EE$ to $\RR^d$.
\end{itemize}
\end{minipage}}
\end{equation}

If the process $\{ X_t \}$ happens to be stationary, it will be assumed to be defined for all integer $t$. The distribution of $X_0$ is assumed to be multivariate regularly varying.

The aim of the paper is to analyze the special structure of weak limits of the finite-dimensional distributions of the process conditionally on $\|X_0\|$ being large, where $\|\cdot\|$ denotes the Euclidean norm. More precisely, we will investigate the weak limits, called the forward tail chain, of vectors of the form $(X_0, \ldots, X_t)$ given that $\|X_0\|$ exceeds a high threshold. If in addition the process is stationary we will extend this to find the so-called back-and-forth tail chain, which corresponds to the weak limits of vectors of the form $(X_{-s}, \ldots, X_t)$ given that $\|X_0\|$ is large. A close relation of these processes to multivariate regular variation of the whole process has been analyzed in \citet{BS09}. In this article, we are interested in the special form of the processes, in particular the Markovian structure of both the forward and the backward process and how they necessarily determine each other.

The process $\{ X_t \}$ is obviously a discrete-time homogeneous Markov chain. On the other hand, every homogeneous discrete-time Markov chain $\{ X_t \}$ on a complete separable metric space can be represented as in \eqref{E:MC:1}--\eqref{E:MC:2} \citep{Ki86}. Of course, for a given Markov chain $\{ X_t \}$ the above representation is not unique. Still, in examples, the way in which Markov chains are defined is often through a recursive equation; all examples in \citet[][pp.~126--127]{Goldie91}, for instance, are of this type. The chain is stationary if and only if the random vectors $X_1 = \Phi(X_0, \eps_1)$ and $X_0$ are equal in law.

In \citet{Smith92} and \citet{Perfekt94}, excursions of a univariate Markov chain over a high threshold following an extreme event are shown to behave asymptotically and under quite general conditions as a (multiplicative) random walk. The theory has been extended to multivariate Markov chains in \citet{Perfekt97} and to higher-order Markov chains in \citet{Yun98, Yun00}. More recently, \citet{ReZe13} have analyzed the topic with a special view towards the convergence of Markov kernels and added a criterion to distinguish between extreme and non-extreme states of a Markov chain as the threshold rises. The random-walk representation is useful from a statistical perspective because it gives a handle on how to model the extremes of certain time series (\citet{BC00, CST97, STC97}). A useful, well-investigated class of processes for which the random walk structure is quite revealing are the stationary solutions to certain stochastic difference equations, including squared (generalized) autoregressive 
conditionally heteroskedastic (ARCH/GARCH) processes as a special case (\citet{BDM02b, GdHP04, dHRRdV89}). 

A limitation of the theory of \citet{Smith92}, \citet{Perfekt94} and \citet{ReZe13} is that it is specialized to univariate, nonnegative Markov chains. Similarly, \citet{Perfekt97} only considers the upper extremes of a multivariate Markov chain. When extending the theory to real-valued and higher dimensional chains, one has to keep in mind that extremes may be both positive or negative and that extreme values of $X_t$ may depend not only on $\|X_{t-1}\|$ but also on $X_{t-1}/ \|X_{t-1}\|$. The simplest case of the extension on which we will focus deals with real-valued univariate Markov chains, where an extreme value of $X_t$ may depend on the sign of $X_{t-1}$ as can be observed for instance in time series of logreturns of prices of financial securities in periods of high volatility. The observation of this so-called leverage effect has lead to the formulation of asymmetric extensions of GARCH models (cf., for example, \cite{Zi09}). For such Markov chains with tail switching potential, the 
random walk representation of excursions over high thresholds breaks down in the sense that the distribution of the multiplicative increment now depends in general 
on the sign of the chain on the previous step. In \citet{BC03}, a more general representation is postulated, involving in fact four transition mechanisms rather than one, corresponding to the four cases of transitions from and to upper or lower extreme states.


The novelty of this paper is two-fold: first, to explicitly state the random walk representation in the general $\mathbb{R}^d$-valued case; second, in the stationary case, to study the joint distribution of the forward and backward tail chain, coined the {\em back-and-forth tail chain}. Throughout, some remarkable simplifications in the (univariate) real-valued case will be studied in more detail. In particular, in the univariate case the backward tail chain is again a random walk which is in some sense dual the forward tail chain. Besides the assumption that the distribution of $X_0$ is regularly varying, the only condition is a relatively easy-to-check statement on the asymptotic behaviour of $\Phi(x, \, \cdot \,)$ for large $\|x\|$.

The outline of the paper is as follows. The forward tail chain of a possibly non-stationary $\mathbb{R}^d$-valued Markov chain is studied in section~\ref{S:forward}. For stationary Markov chains, the tail chain can be extended to the past of the process, the backward tail chain, see section~\ref{S:backforth}. Section~\ref{S:adjoint} describes a kind of adjoint relation between distributions which is motivated by a general property of tail processes of stationary processes. In section~\ref{S:BFTC}, we show that a certain class of processes, coined back-and-forth tail chains, which are derived from this adjoint distribution, form exactly the class of tail processes which arise in our Markovian setting. Finally, section~\ref{S:examples} provides some examples to the theory, including an application to stationary solutions of (multivariate) stochastic difference equations.

To conclude this section, let us fix some notations. We write $(x)_+=\max(x,0)$ for the positive part of $x \in\mathbb{R}$ and $(x)_-=\min(x,0)$ for the negative part. The transpose of a matrix $A$ is denoted by $A'$. The law of a random vector $X$ is denoted by $\law(X)$; weak convergence of probability measures is denoted by $\Rightarrow$. The probability measure degenerate at a point $x$ is denoted by $\delta_x$, and $\mbox{Unif}(E)$ denotes the uniform distribution on a compact set $E$. The indicator of an event $A$ is denoted by $\1_A(\cdot)$. We write $\overline{\mathbb{R}}$ for $\mathbb{R}\cup\{-\infty,\infty\}$, $\mathbb{S}^{d-1}$ for $\{x \in \mathbb{R}^d: \|x\|=1\}$ and $0$ for a vector (of suitable dimension) which consists of all zeros. Let $\ZZ$ be the set of integers and $\NN_0$ be the set of nonnegative integers.

\section{Forward tail chains}
\label{S:forward}

Let $X_0, X_1, X_2, \ldots$ be a homogeneous Markov chain as in \eqref{E:MC:1} and \eqref{E:MC:2}, not necessarily stationary. The focus of this section is on the weak limits of the finite-dimensional distributions of the process conditionally on $\|X_0\|$ being large (Theorem~\ref{T:forward}). Two conditions are required: Condition~\ref{C:RV} on the tails of $X_0$, and Condition~\ref{C:phi} on the asymptotics of $x \mapsto \Phi(x, e)$ for large $\|x\|$. See for instance \citet{Re07} for details on multivariate regular variation.

\begin{cond}
\label{C:RV}
The distribution of $X_0$ is multivariate regularly varying on $\overline{\mathbb{R}}^d \setminus \{0\}$, that is, there exists a non-degenerate probability measure $\Upsilon$ on $\mathbb{S}^{d-1}$ (called the spectral measure) and an $\alpha>0$ such that
\begin{equation}
\label{E:RV}
      \lim_{x \to \infty} \Pr\left(\|X_0\|>ux, \frac{X_0}{\|X_0\|} \in S\, \vrule \, \|X_0\| > x\right) = u^{-\alpha}\Upsilon(S) 
\end{equation}
for all Borel sets $S \subset \mathbb{S}^{d-1}$ which satisfy $\Upsilon(\partial S)=0$ and $u \geq 1$. \end{cond}

The second condition states that the function $\Phi$ in \eqref{E:MC:1} is asymptotically homogeneous in $x$ for large values of $\|x\|$.
\begin{cond}
\label{C:phi} 
There exists a measurable map $\phi: \mathbb{S}^{d-1} \times \mathbb{E} \mapsto \mathbb{R}^d$ such that, for all $e \in \mathbb{E}$,
\begin{equation}
\label{E:phi:V}
	\lim_{x \to \infty} x^{-1} \Phi(x s(x), e) = \phi(s,e)
\end{equation}
whenever $s(x) \to s$ in $\mathbb{S}^{d-1}$.

Moreover, if $\Pr(\phi(s, \eps_1) = 0) > 0$ for some $s \in \mathbb{S}^{d-1}$, then also $\Pr(\eps_1 \in \WW) = 1$, where $\WW$ is a measurable subset of $\EE$ such that for all $e \in \WW$,
\begin{equation}
\label{E:phi:W}
	\sup_{\|y\| \leq x} |\Phi(y, e)| = O(x), \qquad x \to \infty.
\end{equation}
\end{cond}

We extend the domain of the limit function $\phi$ in \eqref{E:phi:V} to $\mathbb{R}^d \times \mathbb{E}$ by setting
\begin{equation}
\label{eq:phi}
  \phi(v, e) = 
  \begin{cases}
    \| v \| \, \phi( v / \| v \|, e ) & \text{if $v \ne 0$,} \\
    0 & \text{if $v = 0$.}
  \end{cases}
\end{equation}

\begin{lem}\label{L:generalphi}
If Condition \ref{C:phi} holds, then 
\begin{equation}
\label{E:generalphi}
  \lim_{x \to \infty} x^{-1} \Phi(x v(x), e) 
  = \phi(v,e)
\end{equation}
whenever $v(x) \to v \in \mathbb{R}^d \setminus{0}$ and $e \in \mathbb{E}$. If $\Pr(\phi(s, \eps_1) = 0) > 0$ for some $s \in \mathbb{S}^{d-1}$, then \eqref{E:generalphi} also holds for $v(x) \to v = 0$ and $e \in \WW$.
\end{lem}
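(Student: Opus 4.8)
My plan is to separate two cases: the generic direction $v \ne 0$, where \eqref{E:generalphi} will follow from Condition~\ref{C:phi} after pulling the scalar $\|v(x)\|$ out of the argument of $\Phi$, and the degenerate direction $v = 0$, where the claim will follow from the growth bound \eqref{E:phi:W} alone.

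For $v \ne 0$ I would note first that $\|v(x)\| \to \|v\| > 0$, so that $v(x) \ne 0$ for all large $x$, and for such $x$ write
\[
  x^{-1}\Phi(xv(x), e)
  = \|v(x)\| \cdot g(x)^{-1}\, \Phi\!\left( g(x)\,\frac{v(x)}{\|v(x)\|},\, e \right),
  \qquad g(x) := x\|v(x)\|,
\]
observing that $g(x) \to \infty$ while $v(x)/\|v(x)\| \to v/\|v\|$ in $\mathbb{S}^{d-1}$. Granting that the last factor tends to $\phi(v/\|v\|, e)$, the right-hand side converges to $\|v\|\,\phi(v/\|v\|, e)$, which is $\phi(v, e)$ by the extension \eqref{eq:phi}; this is \eqref{E:generalphi}.

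The point that needs care --- and the step I expect to be the main obstacle --- is the justification that the last factor tends to $\phi(v/\|v\|, e)$: in Condition~\ref{C:phi} the variable $x$ plays two roles at once, as the scaling factor and as the index of the approximating directions $s(x)$, whereas here the scaling factor is $g(x)$, not $x$. I would therefore first upgrade \eqref{E:phi:V} to its joint form: for each $e$ and each $s_0 \in \mathbb{S}^{d-1}$, $x^{-1}\Phi(xs, e) \to \phi(s_0, e)$ as $x \to \infty$ and $s \to s_0$ in $\mathbb{S}^{d-1}$, equivalently along every pair of sequences $x_n \to \infty$ and $s_n \to s_0$. This is a routine consequence of \eqref{E:phi:V}: were it false, one could pick sequences $x_n \to \infty$ and $s_n \to s_0$ with $x_n^{-1}\Phi(x_n s_n, e)$ bounded away from $\phi(s_0, e)$, pass to a subsequence along which $x_n$ is strictly increasing, and interpolate the $s_n$ into a step function $s(\cdot)$ with $s(x) \to s_0$ and $s(x_n) = s_n$ for all $n$, contradicting \eqref{E:phi:V}. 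Applying the joint form with scaling factor $g(x) \to \infty$ and directions $v(x)/\|v(x)\| \to v/\|v\|$ then closes the generic case.

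For the degenerate case I would use only \eqref{E:phi:W}. Assuming $\Pr(\phi(s, \eps_1) = 0) > 0$ for some $s$, Condition~\ref{C:phi} guarantees that $\Pr(\eps_1 \in \WW) = 1$ and that \eqref{E:phi:W} holds for every $e \in \WW$; fix one such $e$ and take $v(x) \to 0$, then pick constants $C, x_0 > 0$ with $\sup_{\|y\| \le r} |\Phi(y, e)| \le C r$ for all $r \ge x_0$. Since $\|xv(x)\| = x\|v(x)\| \le \max(x_0, x\|v(x)\|)$ and the supremum above is nondecreasing in the radius,
\[
  x^{-1} |\Phi(xv(x), e)| \;\le\; C \max\!\left( \frac{x_0}{x},\, \|v(x)\| \right) \;\longrightarrow\; 0
\]
as $x \to \infty$, because $\|v(x)\| \to 0$. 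Hence $x^{-1}\Phi(xv(x), e) \to 0 = \phi(0, e)$, which is \eqref{E:generalphi} in this case. The remaining steps are entirely elementary.
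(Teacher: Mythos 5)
Your proof is correct and follows the same route as the paper's: for $v \ne 0$ you factor out $\|v(x)\|$ and invoke Condition~\ref{C:phi}, and for $v = 0$ you use the growth bound \eqref{E:phi:W}. The one step the paper leaves implicit, namely that Condition~\ref{C:phi} delivers the joint (sequential) convergence needed after the reparametrization $g(x) = x\|v(x)\|$, you have spelled out via the interpolation argument, which is a valid clarification rather than a different proof.
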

\begin{proof} If $v(x) \to v \in \mathbb{R}^d \setminus{0}$, then both $\|v(x)\|\to \|v\|$ and $v(x)/\|v(x)\| \to v/\|v\|$.  Thus
\[
  \lim_{x \to \infty} \frac{\Phi(x v(x), e)}{x} 
  = \lim_{x \to \infty} \|v(x)\|\frac{\Phi(x \|v(x)\|(v(x)/\|v(x)\|), e)}{x\|v(x)\|}
  = \| v \| \, \phi( v / \| v \|, e )
\]
which, by \eqref{eq:phi}, gives \eqref{E:generalphi}. The case $v(x) \to 0$  follows from \eqref{E:phi:W}.
\end{proof}

\begin{thm}
\label{T:forward}
Let $\{ X_t : t \in \mathbb{N}_0 \}$ be given by \eqref{E:MC:1}--\eqref{E:MC:2}. If Conditions~\ref{C:RV} and \ref{C:phi} hold, then for every integer $t \geq 0$, as $x \to \infty$,
\begin{equation}
\label{E:forward:1}
	\law \biggl( \frac{\|X_0\|}{x}, \frac{X_0}{\|X_0\|}, \frac{X_1}{\|X_0\|}, \ldots, \frac{X_t}{\|X_0\|} \bigg| \|X_0\| > x \biggr)
	\Rightarrow \law(Y, M_0, M_1, \ldots, M_t)
\end{equation}
with 
\begin{equation}
\label{E:Mj}
	M_j =  \phi(M_{j-1},\eps_j), \qquad j = 1, 2, \ldots,
\end{equation}
and
\begin{equation}
\label{E:forward:2}
\mbox{\begin{minipage}[h]{0.90\textwidth}
\begin{itemize}
\item[(i)] $Y, M_0, \eps_1, \eps_2, \ldots$ are independent with $\eps_t$ as in \eqref{E:MC:2}(i);
\item[(ii)] $\Pr(Y > y) = y^{-\alpha}$ for $y \geq 1$;
\item[(iii)] $\mathcal{L}(M_0)=\Upsilon$.
\end{itemize}
\end{minipage}}
\end{equation}
We call $\{M_t: t \in \mathbb{N}_0\}$ the \emph{forward tail chain of} $\{X_t: t \in \mathbb{N}_0\}$.
\end{thm}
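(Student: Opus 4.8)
The plan is to establish \eqref{E:forward:1}--\eqref{E:forward:2} by induction on $t$. For $t=0$ the assertion is merely a reformulation of Condition~\ref{C:RV}: reading \eqref{E:RV} as weak convergence, on $[1,\infty)\times\SS^{d-1}$, of the conditional law of $(\|X_0\|/x,\,X_0/\|X_0\|)$ towards the product of the $\alpha$-Pareto law of $Y$ and the spectral measure $\Upsilon=\law(M_0)$ is standard (cf.\ \citet{Re07}).

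For the inductive step, suppose \eqref{E:forward:1}--\eqref{E:forward:2} hold with $t-1$ in place of $t$, and fix a bounded continuous $f$ on $[1,\infty)\times\SS^{d-1}\times(\RR^d)^t$. Since $\eps_t$ is independent of $\mathcal{F}_{t-1}=\sigma(X_0,\ldots,X_{t-1})$ while $\{\|X_0\|>x\}\in\mathcal{F}_{t-1}$, inserting $X_t=\Phi(X_{t-1},\eps_t)$ together with $\|X_0\|=x\cdot(\|X_0\|/x)$ and $X_{t-1}=\|X_0\|\cdot(X_{t-1}/\|X_0\|)$, the freezing lemma for conditional expectations gives
\[
  \EV\!\left[\, f\!\left(\tfrac{\|X_0\|}{x},\tfrac{X_0}{\|X_0\|},\ldots,\tfrac{X_t}{\|X_0\|}\right)\,\Big|\,\|X_0\|>x\,\right]
  = \EV\!\left[\, g_x\!\left(\tfrac{\|X_0\|}{x},\tfrac{X_0}{\|X_0\|},\ldots,\tfrac{X_{t-1}}{\|X_0\|}\right)\,\Big|\,\|X_0\|>x\,\right],
\]
where (Fubini yielding measurability)
\[
  g_x(r,m_0,\ldots,m_{t-1}) := \EV\!\left[\, f\!\left(r,m_0,\ldots,m_{t-1},\tfrac{1}{rx}\,\Phi(rx\,m_{t-1},\eps_1)\right)\right].
\]
By the induction hypothesis the argument of $g_x$ converges weakly, under $\Pr(\,\cdot\mid\|X_0\|>x)$, to $(Y,M_0,\ldots,M_{t-1})$.

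The key step is to prove that $g_x$ converges \emph{continuously} to
\[
  g(r,m_0,\ldots,m_{t-1}) := \EV\!\left[\, f\!\left(r,m_0,\ldots,m_{t-1},\phi(m_{t-1},\eps_1)\right)\right],
\]
i.e.\ $g_x(z_x)\to g(z)$ whenever $z_x=(r_x,m_{0,x},\ldots,m_{t-1,x})\to z=(r,m_0,\ldots,m_{t-1})$. Since $r_x\to r\geq1$ and $x\to\infty$ force $r_xx\to\infty$, Lemma~\ref{L:generalphi} gives, for each fixed value $e$ of $\eps_1$, that $(r_xx)^{-1}\Phi(r_xx\,m_{t-1,x},e)\to\phi(m_{t-1},e)$ — immediately when $m_{t-1}\neq0$, and for $e\in\WW$ when $m_{t-1}=0$, where by the second half of Condition~\ref{C:phi} the set $\WW$ carries full $\eps_1$-probability precisely in the case where $m_{t-1}$ may vanish. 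Continuity and boundedness of $f$ together with dominated convergence over $\eps_1$ then give $g_x(z_x)\to g(z)$. The extended continuous mapping theorem therefore applies — on $\{m_{t-1}\neq0\}$ when $\Pr(\phi(s,\eps_1)=0)=0$ for all $s$, noting $\Pr(M_{t-1}=0)=0$ in that case (induct: $M_0\in\SS^{d-1}$, and $M_j=\|M_{j-1}\|\,\phi(M_{j-1}/\|M_{j-1}\|,\eps_j)$ with $\eps_j$ independent of $M_{j-1}$), and on the whole space otherwise. As $|g_x|\leq\|f\|_\infty$, weak convergence upgrades to convergence of expectations, so the right-hand side of the first display tends to $\EV[g(Y,M_0,\ldots,M_{t-1})]$; since $\eps_1\eqd\eps_t$ is independent of $(Y,M_0,\ldots,M_{t-1})$ this equals $\EV[f(Y,M_0,\ldots,M_{t-1},\phi(M_{t-1},\eps_t))]=\EV[f(Y,M_0,\ldots,M_t)]$ by \eqref{E:Mj}. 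As $f$ was arbitrary, \eqref{E:forward:1} holds at level $t$.

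I expect the main obstacle to be this continuous-convergence step, and within it the degenerate case $m_{t-1}=0$: this is exactly what forces the second part of Condition~\ref{C:phi} and the bound \eqref{E:phi:W}, and one must check that $g_x(z_x)\to g(z)$ holds along \emph{every} sequence approaching a point of the relevant full-probability set, so that the extended continuous mapping theorem is legitimately applicable. The remaining ingredients — the base case, the freezing lemma, and the identification of the limit — are routine.
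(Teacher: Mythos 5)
Your proof follows essentially the same route as the paper's: induction on $t$, reduction via the Markov/independence structure to a comparison of $g_x$ with $g$, continuous convergence of $g_x$ to $g$ via Lemma~\ref{L:generalphi}, and the extended continuous mapping theorem \citep[][Theorem~18.11]{vdV98}. The only differences are cosmetic — you invoke a ``freezing lemma'' where the paper just writes out the substitution $X_t = \Phi(X_{t-1},\eps_t)$, and you spell out in more detail than the paper the dichotomy behind the continuous-convergence step (either $\Pr(M_{t-1}=0)=0$ so one only needs convergence at $m_{t-1}\ne 0$, or $\Pr(\eps_1\in\WW)=1$ so the $v=0$ case of Lemma~\ref{L:generalphi} covers the rest) — which is a correct reading of why Condition~\ref{C:phi} is structured as it is.
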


\begin{proof}
The argument is by induction on $t$. The case $t = 0$ is a straightforward consequence of Condition~\ref{C:RV}. So let $t$ be a positive integer and let $f : \RR\times(\RR^d)^{t+1} \to \RR$ be bounded and continuous. We have to show that
\begin{equation}
\label{E:forward:10}
	\lim_{x \to \infty}
	\EV \biggl[
	f \biggl( \frac{\|X_0\|}{x}, \frac{X_0}{\|X_0\|}, \ldots, \frac{X_t}{\|X_0\|} \biggr)
	\, \biggl| \, \|X_0\| > x
	\biggr]
	= \EV [ f(Y, M_0, \ldots, M_t) ].
\end{equation}
By \eqref{E:MC:1}, if $X_0 \neq 0$,
\[
	\frac{X_t}{\|X_0\|}
	= \frac{\Phi(X_{t-1}, \eps_t)}{\|X_0\|}
	= \frac{\Phi(x \frac{\|X_0\|}{x} \frac{X_{t-1}}{\|X_0\|}, \eps_t)}
	{x \frac{\|X_0\|}{x}}.
\]
Hence,
\begin{eqnarray}
\label{E:forward:20}
	\lefteqn{
	\EV \biggl[
	f \biggl( \frac{\|X_0\|}{x}, \frac{X_0}{\|X_0\|}, \ldots, \frac{X_t}{\|X_0\|} \biggr)
	\, \biggl| \, \|X_0\| > x
	\biggr]
	} \\
	&=&
	\EV \biggl[
	g_x \biggl( \frac{\|X_0\|}{x}, \frac{X_0}{\|X_0\|}, \ldots, \frac{X_{t-1}}{\|X_0\|} \biggr)
	\, \biggl| \, \|X_0\| > x
	\biggr] 
	\nonumber
\end{eqnarray}
where
\begin{equation}
\label{E:forward:gx}
	g_x(y, x_0, \ldots, x_{t-1})
	= \EV \biggl[ f \biggl( y, x_0, \ldots, x_{t-1}, \frac{\Phi(xyx_{t-1}, \eps_t)}{xy} \biggr) 
	\biggr]
\end{equation}
(note that the expectation is taken with respect to the distribution of $\epsilon_t$).
Define
\begin{equation}
\label{E:forward:g}
	g(y, x_0, \ldots, x_{t-1})
	= \EV [ f (y, x_0, \ldots, x_{t-1}, \phi(x_{t-1},\eps_t) ) ].
\end{equation}
By \eqref{E:Mj},
\begin{equation}
\label{E:forward:40}
	\EV [ f(Y, M_0, \ldots, M_t) ] = \EV [ g(Y, M_0, \ldots, M_{t-1}) ].
\end{equation}
In view of the identities \eqref{E:forward:20} and \eqref{E:forward:40}, the limit relation in \eqref{E:forward:10} will follow if we can show that
\begin{equation}
\label{E:forward:50}
	\EV \biggl[
	g_x \biggl( \frac{\|X_0\|}{x}, \frac{X_0}{\|X_0\|}, \ldots, \frac{X_{t-1}}{\|X_0\|} \biggr)
	\, \biggl| \, \|X_0\| > x
	\biggr]
	\to \EV [ g(Y, M_0, \ldots, M_{t-1}) ]
\end{equation}
as $x \to \infty$. In turn, \eqref{E:forward:50} will follow from the induction hypothesis and an extension of the continuous mapping theorem \citep[][Theorem~18.11]{vdV98} provided
\begin{equation}
\label{E:forward:55}
	\lim_{x \to \infty} g_x( y(x), x_0(x), \ldots, x_{t-1}(x) ) = g( y, x_0, \ldots, x_{t-1} )
\end{equation}
whenever $y(x) \to y$ and $x_i(x) \to x_i$ as $x \to \infty$ with $(y, x_0, \ldots, x_{t-1})$ ranging over a set $E \subset \RR \times (\RR^d)^t$ with $\Pr((Y, M_0, \ldots, M_{t-1}) \in E) = 1$. From the definitions of $g_x$ and $g$ in \eqref{E:forward:gx} and \eqref{E:forward:g}, respectively, equation~\eqref{E:forward:55} is implied by
\begin{equation}
\label{E:forward:60}
	\lim_{x \to \infty} \frac{\Phi(x w(x), v)}{x} = \phi(w,v)
\end{equation}
whenever $\lim_{x \to \infty} w(x) = w$ and where $w$ and $v$ range over sets that receive probability one by the distributions of $M_{t-1}$ and $\eps_1$, respectively. Since \eqref{E:forward:60} is ensured by Condition~\ref{C:phi} and Lemma \ref{L:generalphi}, the statement follows. 
\end{proof}

\section{Backward tail processes}
\label{S:backforth}

From now on, the process $\{ X_t \}$ in \eqref{E:MC:1} and \eqref{E:MC:2} is assumed to be strictly stationary. A necessary and sufficient condition for stationarity is that 
\begin{equation}
\label{E:MC:3}
	\law(\Phi(X_0, \eps_1)) = \law(X_0).
\end{equation}
It may be highly non-trivial to find the law for $X_0$ that solves \eqref{E:MC:3}. But even when the stationary distribution does not admit an explicit expression, its tails may in many cases be found by the theory developed originally in \citet{Kesten73}, \citet{Letac86} and \citet{Goldie91}. For recent results on specific models, see for instance \citet{KluPer03, KluPer04}, \citet{DeSa04}, \citet{Mirek11}, \citet{Bura12}, and \citet{CV13}.

If the process $\{ X_t \}$ is stationary, then by Kolmogorov's extension theorem and changing the probability space if necessary, the range of $t$ can without loss of generality be assumed to be the set of all integers, $\ZZ$; recall that we are interested in distributional properties only, not in almost sure properties, for instance.

Our aim is to extend Theorem~\ref{T:forward} and find the asymptotic distribution of the random vector $(X_{-s}, \ldots, X_t)$ conditionally on $\|X_0\| > x$ as $\| x \| \to \infty$, for all integer $s$ and $t$ (Corollary~\ref{Cor:spectralisBFTC}). 
According to \citet[][Theorem~2.1]{BS09}, if the underlying process is stationary, the existence of a forward tail process $(t \in \NN_0)$ is enough to guarantee the existence of the tail process as a whole ($t \in \mathbb{Z}$).

\begin{prop}
\label{P:BS09:2.1}
Let $\{ X_t : t \in \ZZ\}$ be a stationary Markov chain with distribution determined by \eqref{E:MC:1}, \eqref{E:MC:2} and \eqref{E:MC:3}. If Conditions~\ref{C:RV} and \ref{C:phi} hold, then there exists a process $\{M_t: t \in \mathbb{Z}\}$ such that
\begin{equation}
\label{E:forwardandbackward:existence}
	\law \biggl(\frac{X_{-s}}{\|X_0\|}, \dots, \frac{X_0}{\|X_0\|}, \ldots, \frac{X_t}{\|X_0\|} \bigg| \|X_0\| > x \biggr)
	\Rightarrow \law(M_{-s}, \ldots, M_0, \ldots, M_t)
\end{equation}
for all integer $s,t \geq 0$.
\end{prop}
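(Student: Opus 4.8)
The plan is to obtain the two-sided tail process by combining the forward tail chain already constructed in Theorem~\ref{T:forward} with the structure theory for stationary regularly varying time series of \citet{BS09}. By Theorem~\ref{T:forward}, Conditions~\ref{C:RV} and \ref{C:phi} imply that, for every integer $t \ge 0$, $\law( \|X_0\|/x,\, X_0/\|X_0\|,\, \ldots,\, X_t/\|X_0\| \mid \|X_0\| > x ) \Rightarrow \law(Y, M_0, \ldots, M_t)$, with $\Pr(Y > y) = y^{-\alpha}$ for $y \ge 1$, $Y$ independent of $(M_0, \ldots, M_t)$, and $\law(M_0) = \Upsilon$; see \eqref{E:forward:1}--\eqref{E:forward:2}. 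Together with the regular variation of $\|X_0\|$ from Condition~\ref{C:RV}, this says precisely that the stationary chain $\{X_t : t \in \ZZ\}$ admits a \emph{forward} tail process in the sense of \citet{BS09}.

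The next step is to invoke \citet[][Theorem~2.1]{BS09}, which shows that for a strictly stationary $\RR^d$-valued time series the existence of a forward tail process is equivalent to multivariate regular variation (with the same index $\alpha$) of all finite-dimensional distributions $(X_m, \ldots, X_n)$, $m \le n$ in $\ZZ$, and hence to the existence of the full two-sided tail process: there is a process $\{M_t : t \in \ZZ\}$ with $\law( (X_t/\|X_0\|)_{|t| \le k} \mid \|X_0\| > x ) \Rightarrow \law( (M_t)_{|t| \le k} )$ for every $k \in \NN_0$. Taking $k = \max(s,t)$ and restricting to the coordinates with index in $\{-s, \ldots, t\}$ yields \eqref{E:forwardandbackward:existence}. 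Since the restriction of $\{M_t\}$ to $t \ge 0$ must agree in law with the limit in \eqref{E:forward:1}, it coincides with the forward tail chain of Theorem~\ref{T:forward}, which is why the same notation is used.

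What has to be checked by hand is only that Theorem~\ref{T:forward} supplies exactly the input required by \citet[][Theorem~2.1]{BS09} --- a forward conditional limit with an independent Pareto-type radial part under the normalization $\Pr(\|X_0\| > x)$ --- and that $\{X_t\}$ is stationary, which is assumed throughout this section through \eqref{E:MC:3}; both are immediate. The genuinely substantive content, namely the passage from the one-sided forward limits to the two-sided tail process, is the hard part, and it is precisely what \citet{BS09} establish: by stationarity one must control blocks $(X_{-s}, \ldots, X_t)$ anchored at an arbitrary time and reconstruct the full limiting measure of such a block from its time shifts, rather than only its restriction to the region where the $X_0$-coordinate is large. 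We borrow this step rather than reprove it; everything else --- matching normalizations, verifying the relevant continuity-set conditions for the limiting spectral measure, and assembling the finite-dimensional limits into a process indexed by $\ZZ$ via Kolmogorov's extension theorem --- is routine.
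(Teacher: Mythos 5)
Your argument is correct and matches the paper's own (very terse) proof essentially verbatim: both establish the forward tail process via Theorem~\ref{T:forward} and then invoke \citet[Theorem~2.1]{BS09} to upgrade the one-sided limits to the full two-sided tail process, with the continuous-mapping step handling the change of normalization from $x$ to $\|X_0\|$. Nothing further is required.
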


\begin{proof}
This follows from our Theorem \ref{T:forward} and Theorem 2.1 in \citet{BS09}, combined with a continuous mapping argument.
\end{proof}

We call the process $\{M_t: t \in \mathbb{Z}\}$ the \emph{spectral (tail) process} of $\{X_t: t \in \mathbb{Z}\}$, in accordance with the definition of the process $\{\Theta_t: t \in \mathbb{Z}\}$ in \citet{BS09}. 

\citet{BS09} also state an important property of the limiting process.
\begin{prop}
\label{P:BS09:3.1} 
Let $\{ X_t : t \in \ZZ\}$ be a stationary Markov chain with distribution determined by \eqref{E:MC:1}, \eqref{E:MC:2} and \eqref{E:MC:3} and spectral process $\{M_t: t \in \mathbb{Z}\}$. Then for all $s,t \geq 0$ and for all bounded and measurable $f:(\mathbb{R}^d)^{s+t+1} \to \mathbb{R}$ satisfying $f(y_{-s}, \ldots, y_t)=0$ whenever $y_{-s}=0$,
\begin{equation}
\label{E:timechange}
 E\left[f(M_{-s}, \ldots, M_{t}) \right]= E\left[f\left(\frac{M_0}{\|M_s\|}, \ldots, \frac{M_{s+t}}{\|M_s\|} \right) \|M_s\|^\alpha\1_{\{M_s \neq 0\}} \right].
\end{equation}
\end{prop}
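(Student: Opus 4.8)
The plan is to recognise \eqref{E:timechange} as the time-change formula of \citet[][Theorem~3.1]{BS09}: by Proposition~\ref{P:BS09:2.1} our stationary Markov chain is a stationary regularly varying sequence with spectral process $\{M_t : t \in \ZZ\}$, so that result applies directly. For a self-contained argument I would exploit strict stationarity together with Theorem~\ref{T:forward}. First I would reduce, by routine approximation, to the case where $f$ is bounded, continuous and compactly supported with support inside $\{(y_{-s},\dots,y_t): \norm{y_{-s}} > \eta\}$ for some $\eta > 0$, lifting the measurability, compact-support and $\norm{y_{-s}} > \eta$ restrictions only at the very end, once it is known that $\EV[\norm{M_s}^\alpha \1_{\{M_s \neq 0\}}] < \infty$. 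For such $f$, shifting the time index in Proposition~\ref{P:BS09:2.1} by $s$ and using $\Pr(\norm{X_s} > x) = \Pr(\norm{X_0} > x)$, I would first observe that
\[
  \EV[f(M_{-s}, \dots, M_t)]
  = \lim_{x \to \infty} \frac{1}{\Pr(\norm{X_0} > x)}
    \EV\!\left[ f\!\left( \frac{X_0}{\norm{X_s}}, \dots, \frac{X_{s+t}}{\norm{X_s}} \right) \1_{\{\norm{X_s} > x\}} \right] .
\]

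Next, for a fixed $\eps \in (0, \eta)$, I would split the event $\{\norm{X_s} > x\}$ according to whether $\norm{X_0} \le \eps x$ or $\norm{X_0} > \eps x$. On the first set $\norm{X_0/\norm{X_s}} = \norm{X_0}/\norm{X_s} \le \eps < \eta$, so $f(X_0/\norm{X_s},\dots) = 0$ and this piece vanishes identically. On the second set I would write $X_j/\norm{X_s} = (X_j/\norm{X_0})/\norm{X_s/\norm{X_0}}$ and $\1_{\{\norm{X_s} > x\}} = \1_{\{\norm{X_s/\norm{X_0}}\,\norm{X_0}/x > 1\}}$, and then combine $\Pr(\norm{X_0} > \eps x)/\Pr(\norm{X_0} > x) \to \eps^{-\alpha}$ (Condition~\ref{C:RV} with $S = \SS^{d-1}$), Theorem~\ref{T:forward} applied with threshold $\eps x$ — which gives that, given $\norm{X_0} > \eps x$, the vector $(X_0/\norm{X_0}, \dots, X_{s+t}/\norm{X_0}, \norm{X_0}/x)$ converges in distribution to $(M_0, \dots, M_{s+t}, \eps Y)$ with $\Pr(Y > y) = y^{-\alpha}$, $y \ge 1$, and $Y$ independent of $\{M_j : j \ge 0\}$ — and the continuous mapping theorem \citep[][Theorem~18.11]{vdV98} applied to the bounded map $(m_0, \dots, m_{s+t}, r) \mapsto f(m_0/\norm{m_s}, \dots, m_{s+t}/\norm{m_s})\,\1_{\{\norm{m_s}\, r > 1\}}$. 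This would show that the second piece tends to $\eps^{-\alpha}\,\EV[\,f(M_0/\norm{M_s}, \dots, M_{s+t}/\norm{M_s})\,\1_{\{\eps\norm{M_s}\,Y > 1,\ M_s \neq 0\}}\,]$; integrating out $Y$ and using that on the support of $f$ one has $\norm{M_s} < 1/\eta$ (because $f$ vanishes where $\norm{y_{-s}} \le \eta$ and the first argument $M_0/\norm{M_s}$ has norm $1/\norm{M_s}$), so that $\eps\norm{M_s} < 1$ and the inner probability equals $(\eps\norm{M_s})^\alpha$, makes the prefactor $\eps^{-\alpha}$ cancel and leaves precisely the right-hand side of \eqref{E:timechange} — independently of $\eps$.

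It then remains to lift the restrictions on $f$. Applying the case just proved to an increasing sequence $0 \le f_n \uparrow \1_{\{y_{-s} \neq 0\}}$ of such functions and invoking monotone convergence on both sides yields $\EV[\norm{M_s}^\alpha \1_{\{M_s \neq 0\}}] = \Pr(M_{-s} \neq 0) \le 1 < \infty$; given this, the right-hand side of \eqref{E:timechange} is a finite measure in $f$, so the passage to bounded measurable $f$ supported in $\{\norm{y_{-s}} > \eta\}$ is standard (two finite Borel measures agreeing on $C_c$ of an open set coincide there), and finally the passage to an arbitrary bounded measurable $f$ vanishing on $\{y_{-s} = 0\}$ follows by applying the result to $f\,\1_{\{\norm{y_{-s}} > 1/n\}}$ and letting $n \to \infty$ — dominated by $\norm{f}_\infty$ on the left and by $\norm{f}_\infty\,\norm{M_s}^\alpha\1_{\{M_s \neq 0\}}$ on the right.

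The step I expect to be the main obstacle is the continuous mapping argument, namely checking that $(m_0,\dots,m_{s+t},r) \mapsto f(m_0/\norm{m_s},\dots)\,\1_{\{\norm{m_s}\, r > 1\}}$ is almost everywhere continuous with respect to the limiting law in a neighbourhood of $\{m_s = 0\}$: this set may carry positive mass, and there the arguments $m_j/\norm{m_s}$ blow up. This is exactly why the reduction to compactly supported $f$ is made first: for such $f$ the map is identically zero near $\{m_s = 0\}$ along the support of the limit law, since $\norm{M_0} = 1$ forces the first argument $m_0/\norm{m_s}$ to leave the compact support of $f$ as $m_s \to 0$, and the only remaining discontinuities then lie on $\{\norm{m_s}\, r = 1\}$, a null set because $Y$ has a density. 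The potential double-limit issue (in $x$ and in $\eps$) does not arise: by the cancellation described above, for every sufficiently small $\eps$ the second piece already converges to the $\eps$-free right-hand side of \eqref{E:timechange}, while the first piece is zero for every $x$.
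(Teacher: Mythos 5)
Your opening sentence is exactly the paper's route: invoke Proposition~\ref{P:BS09:2.1} to get a spectral process and then cite Theorem~3.1 of \citet{BS09}. One small caveat there: BS09's Theorem~3.1 is stated for bounded \emph{continuous} $f$, so it does not ``apply directly'' to the bounded measurable $f$ of the Proposition; the paper's proof in fact spends most of its effort on precisely that extension, via a $\pi$-system of closed sets bounded away from $\{0\}\times(\RR^d)^{s+t}$ inside $\mathbb{A}^\ast=(\RR^d)^{s+t+1}\setminus(\{0\}\times(\RR^d)^{s+t})$. Your self-contained argument is a genuinely different route: rather than cite Theorem~3.1 you re-derive it in this Markov setting, using stationarity to rewrite the left-hand side as a limit of conditional expectations on $\{\|X_s\|>x\}$, splitting by $\{\|X_0\|>\eps x\}$ with $\eps<\eta$ so the low-$\|X_0\|$ piece is annihilated by the restricted support of $f$, applying Theorem~\ref{T:forward} at threshold $\eps x$, and then a continuous-mapping step whose a.e.\ continuity you justify correctly (near $\{m_s=0\}$ the compact support forces the map to vanish, and the boundary $\{\|m_s\|r=1\}$ is null because $Y$ has a density); the cancellation $\eps^{-\alpha}(\eps\|M_s\|)^\alpha=\|M_s\|^\alpha$ lands exactly on the right-hand side of \eqref{E:timechange}. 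Your measurable extension also differs from the paper's (first deduce $\EV[\|M_s\|^\alpha\1_{\{M_s\neq 0\}}]\le 1$ by monotone convergence, then identify two finite Borel measures via $C_c$ of the open set $\{\|y_{-s}\|>\eta\}$, then let $\eta\downarrow 0$ with dominated convergence). Both extensions are standard and equivalent in substance. The paper's route buys brevity by leaning on BS09; yours buys self-containment and makes visible where the weight $\|M_s\|^\alpha$ comes from. I see no gap in your argument.
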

\begin{proof}
It follows directly from our Proposition \ref{P:BS09:2.1} and Theorem 3.1 in \cite{BS09} that 
\begin{equation}\label{TC1}
 E\left[f(M_{-s-i}, \ldots, M_{t-i}) \right]= E\left[f\left(\frac{M_{-s}}{\|M_i\|}, \ldots, \frac{M_{t}}{\|M_i\|} \right) \|M_i\|^\alpha\1_{\{M_i \neq 0\}} \right].
\end{equation}
holds for all bounded and continuous $f:(\mathbb{R}^d)^{t+s+1} \to \mathbb{R}$ satisfying $f(y_{-s}, \ldots, y_t)=0$ whenever $y_0=0$ (instead of $y_{-s}=0$) and all $i \in \mathbb{Z}$. We have added the indicator function on the right-hand side for greater clarity. Let $s, t$ and $f$ be as in the statement of the Proposition. Apply \eqref{TC1} to the indices $(\underline{s}, \underline{t}, \underline{i}) = (0, t+s, s)$ to arrive at \eqref{E:timechange}; note that $\underline{s} + 1 + \underline{t} = s + 1 + t$ and that $f(x_{-\underline{s}}, \ldots, x_{\underline{t}}) = 0$ as soon as $x_0 = 0$. Thus, for functions $f$ which are additionally assumed to be continuous, the statement follows directly. 

For the general case, set for abbreviation $\mathbb{A}^\ast:=(\mathbb{R}^d)^{s+t+1}\setminus(\{0\} \times (\mathbb{R}^d)^{s+t})$. Furthermore, let $\mu$ denote the restriction of the law of $(M_{-s}, \ldots, M_t)$ to $\mathbb{A}^\ast$ and let $\nu$ denote the measure on $\mathbb{A}^\ast$ defined by 
$$ \nu(f)=E\left[f\left(\frac{M_{-s}}{\|M_i\|}, \ldots, \frac{M_{t}}{\|M_i\|} \right) \|M_i\|^\alpha\1_{\{M_i \neq 0\}} \right] $$
for all bounded and continuous $f$ on $\mathbb{A}^\ast$. In order to show \eqref{E:timechange} for general bounded and measurable $f$ with $f(y_{-s}, \ldots, y_t)=0$ if $y_{-s}=0$ it suffices to show that $\mu$ and $\nu$ coincide. The closed sets of $(\mathbb{R}^d)^{s+t+1}$ which are bounded away from $\{0\} \times (\mathbb{R}^d)^{s+t}$ are a $\pi$-system generating $\mathbb{B}(\mathbb{A}^\ast)$. Indicator functions of closed sets $A$ can be written as pointwise limits of continuous functions with values in $[0,1]$. If $A$ is bounded away from $\{0\} \times (\mathbb{R}^d)^{s+t}$ we can choose these approximating continuous functions in such a way that they vanish on $\{0\} \times (\mathbb{R}^d)^{s+t}$. Thus, by dominated convergence $\mu(A)=\nu(A)$ for all sets $A$ of a generating $\pi$-system and therefore $\mu=\nu$ on the Borel sets of $\mathbb{A}^\ast$ \citep[][Theorem~2.2]{Bi68}, which finishes the proof.
\end{proof}

By Lemma~2.2 in \citet{BS09} it follows that the distribution of $\{M_t: t \in \ZZ\}$ is uniquely determined by the distribution of $\{M_t: t \in \NN_0\}$ (and $\alpha>0$). We will use \eqref{E:timechange} to analyze the structure of the spectral process with a special focus on the backward process $\{M_ {-t}: t \in \NN_0\}$. At the heart of the connection between the forward and backward processes is an adjoint relation between the laws of $(M_0,M_1)$ and $(M_0,M_{-1})$, studied next.

\section{An adjoint relation between distributions}
\label{S:adjoint}

A special case of the equality \eqref{E:timechange} is
\begin{equation}\label{timechange:onestep}
\EV\left[f(M_{-1}, M_0)\right]= \EV\left[f\left(\frac{M_0}{\|M_1\|}, \frac{M_1}{\|M_1\|} \right) \|M_1\|^\alpha\1_{\{M_1 \neq 0\}} \right]
\end{equation}
for all $f:(\mathbb{R}^d)^2 \to \mathbb{R}$ satisfying $f(y_0, y_1)=0$ whenever $y_0=0$. Starting from a given distribution of $(M_0, M_1)$ we will in the following characterize the distributions of $(M_{-1}, M_0)$ which satisfy \eqref{timechange:onestep}. 
For such an adjoint distribution to exist,
the distribution $(M_0, M_1)$ cannot be chosen arbitrarily from the distributions on $\mathbb{S}^{d-1}\times \mathbb{R}^d$. We therefore introduce the following set of ``admissible'' distributions.

\begin{defn}
\label{def:admissible}
For $\alpha \in (0, \infty)$, let $\mathcal{M}_\alpha = \mathcal{M}_{\alpha,d}$ be the set of all probability measures $P$ on $\mathbb{S}^{d-1} \times \RR^d$ such that
\begin{equation}
\label{eq:admissible}
  \int_{\mathbb{S}^{d-1}\times (\RR^d \setminus \{ 0 \})} \1_S( m / \norm{m} ) \, \norm{m}^\alpha \, P(\ds, \dm) \le P( S \times \RR^d )
\end{equation}
for every Borel set $S \subset \mathbb{S}^{d-1}$. We call $\mathcal{M}_\alpha$ the set of \emph{admissible distributions} for $\alpha>0$. 
\end{defn}

Note that for $P \in \mathcal{M}_\alpha$ we have
\[
  \int_{\mathbb{S}^{d-1} \times \RR^d} \norm{m}^\alpha \, P(\ds, \dm) \le 1.
\]
We now make the already mentioned notion of an ``adjoint'' distribution more concise. 

\begin{defn}
\label{def:adjoint}
For $P \in \mathcal{M}_\alpha$, define a signed Borel measure $P^*$ on $\mathbb{S}^{d-1} \times \RR^d$ by
\begin{align}
\label{eq:adjoint:S0}
  P^*(S \times \{ 0 \}) &= P(S \times \RR^d) - \int_{\mathbb{S}^{d-1} \times (\RR^d \setminus \{ 0 \})} \1_S( m / \norm{m} ) \, \|m\|^\alpha \, P(\ds, \dm), \\
\label{eq:adjoint:E}
  P^*(E) &= \int_{ \mathbb{S}^{d-1} \times (\RR^d \setminus \{ 0 \}) } \1_E( m / \norm{m}, s / \norm{m} ) \, \norm{m}^\alpha \, P(\ds, \dm),
\end{align}
for Borel sets $S \subset \mathbb{S}^{d-1}$ and $E \subset \mathbb{S}^{d-1} \times (\RR^d \setminus \{ 0 \})$.
We call $P^*$ the \emph{adjoint measure of $P$ in $\mathcal{M}_\alpha$}.
\end{defn}

\begin{lem}
\label{lem:adjoint}
Let $P \in \mathcal{M}_\alpha$ and let $P^*$ be as in Definition~\ref{def:adjoint}.
 \begin{enumerate}[(i)]
  \item $P^*$ is a probability measure and the marginal distributions induced by $P$ and $P^*$ on $\mathbb{S}^{d-1}$ are the same.
\item For every measurable function $f : \mathbb{S}^{d-1} \times (\RR^d \setminus \{ 0 \}) \to \RR$,
\begin{multline}
\label{eq:adjoint:f}
  \int_{ \mathbb{S}^{d-1} \times (\RR^d \setminus \{ 0 \}) } f(s^*, m^*) \, P^*(\ds^*, \dm^*) \\*
  = \int_{ \mathbb{S}^{d-1} \times (\RR^d \setminus \{ 0 \}) } f( m / \norm{m}, s / \norm{m} ) \, \norm{m}^\alpha \, P(\ds, \dm)
\end{multline}
in the sense that if one integral exists, then so does the other, and they are the same.
\item $P^* \in \mathcal{M}_\alpha$.
\item $(P^*)^* = P$.
  \end{enumerate}
\end{lem}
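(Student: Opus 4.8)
The plan is to verify the four parts of Lemma~\ref{lem:adjoint} in the order (ii), (i), (iii), (iv), since the change-of-variables formula in (ii) is the engine that drives everything else.

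\textbf{Step 1 (part ii).} I would first establish \eqref{eq:adjoint:f} for indicator functions $f = \1_E$ of Borel sets $E \subset \mathbb{S}^{d-1} \times (\RR^d \setminus \{0\})$: in that case the left-hand side is $P^*(E)$ and the right-hand side is precisely the definition \eqref{eq:adjoint:E}. By linearity this extends to nonnegative simple functions, then to nonnegative measurable $f$ by monotone convergence (both sides increase along an approximating sequence $f_n \uparrow f$), and finally to general measurable $f$ by splitting $f = f_+ - f_-$, with the usual caveat that the identity is asserted in the sense that existence of one integral forces existence of the other. The only subtlety is to note that the map $(s,m) \mapsto (m/\|m\|, s/\|m\|)$ is a well-defined measurable bijection from $\mathbb{S}^{d-1} \times (\RR^d \setminus\{0\})$ onto itself, with measurable inverse given by the same formula (applying it twice returns $(s,m)$, as I will use again in Step 4); so $\1_E(m/\|m\|, s/\|m\|)$ is a legitimate measurable integrand.

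\textbf{Step 2 (part i).} To see that $P^*$ is a probability measure I check three things. First, $P^*$ is nonnegative: on sets $E$ away from $\{0\}$ this is clear from \eqref{eq:adjoint:E}, and on sets of the form $S \times \{0\}$ nonnegativity of \eqref{eq:adjoint:S0} is exactly the admissibility inequality \eqref{eq:admissible} defining $\mathcal{M}_\alpha$. Second, $P^*$ is countably additive, which follows from countable additivity of $P$ together with monotone convergence applied to \eqref{eq:adjoint:E} (and the trivial additivity of \eqref{eq:adjoint:S0} in $S$). Third, total mass one: taking $S = \mathbb{S}^{d-1}$ in \eqref{eq:adjoint:S0} and $E = \mathbb{S}^{d-1} \times (\RR^d\setminus\{0\})$ in \eqref{eq:adjoint:E} and adding, the two $\|m\|^\alpha$-integrals cancel and we are left with $P(\mathbb{S}^{d-1} \times \RR^d) = 1$. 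For the marginal statement, fix a Borel $S \subset \mathbb{S}^{d-1}$ and compute $P^*\bigl((S \times \RR^d)\bigr) = P^*(S \times \{0\}) + P^*\bigl((S \times \RR^d) \cap (\mathbb{S}^{d-1}\times(\RR^d\setminus\{0\}))\bigr)$; the second term, via \eqref{eq:adjoint:E} with $E = S \times (\RR^d\setminus\{0\})$, equals $\int \1_S(m/\|m\|)\,\|m\|^\alpha\,P(\ds,\dm)$, which is exactly the subtracted term in \eqref{eq:adjoint:S0}, so the sum telescopes to $P(S \times \RR^d)$, i.e. the $\mathbb{S}^{d-1}$-marginals agree.

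\textbf{Step 3 (parts iii and iv).} For (iii) I must verify \eqref{eq:admissible} with $P^*$ in place of $P$. Using part (ii) with $f(s^*,m^*) = \1_S(m^*/\|m^*\|)\,\|m^*\|^\alpha$, the left-hand integral $\int \1_S(m^*/\|m^*\|)\,\|m^*\|^\alpha\,P^*(\ds^*,\dm^*)$ transforms into $\int \1_S(s/\|m\|\cdot\|m\|/\|s\|)\,\|s/\|m\|\|^\alpha\,\|m\|^\alpha\,P(\ds,\dm)$; since $\|s\| = 1$ this collapses to $\int \1_S(s)\,P(\ds,\dm) = P(S\times\RR^d) = P^*(S\times\RR^d)$ by part (i), giving \eqref{eq:admissible} for $P^*$ with equality, hence $P^* \in \mathcal{M}_\alpha$. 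For (iv), I apply Definition~\ref{def:adjoint} to $P^*$: for $E$ away from zero, $(P^*)^*(E) = \int \1_E(m^*/\|m^*\|, s^*/\|m^*\|)\,\|m^*\|^\alpha\,P^*(\ds^*,\dm^*)$, and invoking part (ii) once more this becomes $\int \1_E\bigl(\text{(double transformation of }(s,m))\bigr)\,(\text{Jacobian-like factor})\,P(\ds,\dm)$; the double application of $(s,m)\mapsto(m/\|m\|,s/\|m\|)$ is the identity and the $\|m\|^\alpha$ factors cancel against the $\|m^*\|^\alpha = \|s/\|m\|\|^\alpha = \|m\|^{-\alpha}$ weight, leaving $P(E)$. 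The mass on $\mathbb{S}^{d-1}\times\{0\}$ is then forced to match as well because both $P$ and $(P^*)^*$ are probability measures agreeing off that slice; alternatively one reruns the telescoping computation of \eqref{eq:adjoint:S0} for $(P^*)^*$.

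\textbf{Main obstacle.} None of the steps is deep, but the one demanding care is bookkeeping the change of variables: one must keep straight that $\|s/\|m\|\| = 1/\|m\|$ (so the weight $\|m\|^\alpha$ is genuinely inverted under the transformation, which is what makes the involution $(P^*)^* = P$ work) and that the exceptional slice $\mathbb{S}^{d-1}\times\{0\}$ is handled separately throughout, never entering the $\|m\|^\alpha$-weighted integrals. The cleanest way to avoid sign and domain errors is to prove part (ii) first in full generality and then treat (i), (iii), (iv) as formal corollaries of it plus the defining inequality of $\mathcal{M}_\alpha$.
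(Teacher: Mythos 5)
Your plan follows the paper's own strategy closely — prove (ii) for indicators, extend, then derive the rest — and most of it is sound, but there are two slips worth flagging.

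In part (iii), when you apply (ii) to $f(s^*,m^*)=\1_S(m^*/\norm{m^*})\norm{m^*}^\alpha$, the right-hand side of \eqref{eq:adjoint:f} is integrated over $\mathbb{S}^{d-1}\times(\RR^d\setminus\{0\})$ only. After the cancellation you therefore get
\[
  \int_{\mathbb{S}^{d-1}\times(\RR^d\setminus\{0\})}\1_S(s)\,P(\ds,\dm)\;=\;P\bigl(S\times(\RR^d\setminus\{0\})\bigr),
\]
which is $P(S\times\RR^d)$ minus $P(S\times\{0\})$, and the latter need not vanish for $P\in\M_\alpha$ (the slice $\mathbb{S}^{d-1}\times\{0\}$ can carry positive mass). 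So your claim that the admissibility inequality for $P^*$ holds with equality is false in general; what one actually gets is the inequality
\[
  \int \1_S(m^*/\norm{m^*})\norm{m^*}^\alpha\,P^*(\ds^*,\dm^*) = P\bigl(S\times(\RR^d\setminus\{0\})\bigr)\le P(S\times\RR^d)=P^*(S\times\RR^d),
\]
which is exactly what's needed and what the paper writes.

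In part (iv), the primary argument you give — that $(P^*)^*$ and $P$ are both probability measures agreeing on $\mathbb{S}^{d-1}\times(\RR^d\setminus\{0\})$, hence must agree on the slice $\mathbb{S}^{d-1}\times\{0\}$ — is too weak. Two probability measures agreeing off the slice and having equal total mass only forces the total masses \emph{of the slice} to be equal, not that they agree on every Borel set $S\times\{0\}$. You need the stronger fact, used in the paper, that the $\mathbb{S}^{d-1}$-marginals of $(P^*)^*$ and $P$ coincide (apply (i) twice); combined with agreement off the slice, this pins down $(P^*)^*(S\times\{0\})=P(S\times\{0\})$ for every Borel $S$. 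The alternative you mention in passing — rerun the telescoping computation from \eqref{eq:adjoint:S0} — also works, and should be the argument you actually make, rather than the first one.

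Everything else, including the involution bookkeeping $\norm{s/\norm{m}}=1/\norm{m}$ and the proof of (i) and (ii), matches the paper and is fine.
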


\begin{proof}
(i) By \eqref{eq:admissible}, $P^*$ is a nonnegative Borel measure. Let $S$ be a Borel subset of $\mathbb{S}^{d-1}$. We have
\[
  P^*( S \times \RR^d ) = P^*( S \times \{ 0 \} ) + P^*\bigl( S \times (\RR^d \setminus \{ 0 \}) \bigr).
\]
Applying \eqref{eq:adjoint:S0} to the first term on the right-hand side and applying \eqref{eq:adjoint:E} with $E = S \times (\RR^d \setminus \{ 0 \})$ to the second term on the right-hand side yields
\[
  P^*( S \times \RR^d ) = P( S \times \RR^d ).
\]
It follows that $P^*$ is a probability measure (take $S = \mathbb{S}^{d-1}$) on $\mathbb{S}^{d-1} \times \RR^d$ inducing the same marginal distribution on $\mathbb{S}^{d-1}$ as $P$.

(ii) By \eqref{eq:adjoint:E}, equation~\eqref{eq:adjoint:f} holds for indicator functions $\1_E$ of Borel subsets $E$ of $\mathbb{S}^{d-1} \times (\RR^d \setminus \{ 0 \})$. The extension to general bounded, measurable functions follows from the definition of the integral.

(iii) Let $S$ be a Borel subset of $\mathbb{S}^{d-1}$. We will apply \eqref{eq:adjoint:f} to the function
\[
  f(s, m) = \1_S( m / \norm{m} ) \, \norm{m}^\alpha \qquad \text{for $(s, m) \in \mathbb{S}^{d-1} \times (\RR^d \setminus \{ 0 \})$}.
\]
We find
\begin{align*}
  \lefteqn{
  \int_{ \mathbb{S}^{d-1} \times (\RR^d \setminus \{ 0 \}) } \1_S( m^* / \norm{m^*} ) \, \norm{m^*}^\alpha \, P^*(\ds^*, \dm^*)
  } \\
  &= \int_{ \mathbb{S}^{d-1} \times (\RR^d \setminus \{ 0 \}) } f( s^*, m^* ) \, P^*(\ds^*, \dm^*) \\
  &= \int_{ \mathbb{S}^{d-1} \times (\RR^d \setminus \{ 0 \}) } f( m / \norm{m}, s / \norm{m} ) \, \norm{m}^\alpha \, P(\ds, \dm) \\
  &= \int_{ \mathbb{S}^{d-1} \times (\RR^d \setminus \{ 0 \}) } \1_S \biggl( \frac{s / \norm{m}}{\norm{(s / \norm{m})}} \biggr) \, \norm{(s / \norm{m})}^\alpha \, \norm{m}^\alpha \, P(\ds, \dm) \\
  &= P \bigl( S \times ( \RR^d \setminus \{ 0 \} ) \bigr) \\
  &\le P ( S \times \RR^d ) = P^* ( S \times \RR^d ),
\end{align*}
where we applied (i) in the last step.

(iv) Let $Q = (P^*)^*$. We already know that $Q$ is a probability measure on $\mathbb{S}^{d-1} \times \RR^d$, that $Q \in \mathcal{M}_\alpha$, and that the marginal induced by $Q$ on $\mathbb{S}^{d-1}$ coincides with the one of $P^*$ and thus with the one of $P$. Let $f$ be a nonnegative, measurable function on $\mathbb{S}^{d-1} \times (\RR^d \setminus\{ 0 \})$. Define the nonnegative, measurable function $g$ on $\mathbb{S}^{d-1} \times (\RR^d \setminus \{ 0 \})$ by
\[
  g( s, m ) = f( m / \norm{m}, s / \norm{m} ) \, \norm{m}^\alpha, \qquad \text{for $(s, m) \in \mathbb{S}^{d-1} \times (\RR^d\setminus\{0\})$}.
\]
We have
\begin{multline}
\label{eq:adjoint:g2f}
  g( m / \norm{m}, s / \norm{m} ) \, \norm{m}^\alpha \\
  = f\biggl( \frac{s / \norm{m}}{\norm{(s / \norm{m})}}, \frac{m / \norm{m}}{\norm{(s / \norm{m})}} \biggr) \, \norm{(s / \norm{m})}^\alpha \, \norm{m}^\alpha
  = f(s, m).
\end{multline}
By \eqref{eq:adjoint:f} applied first to $Q$ and $f$ and then to $P^*$ and $g$, we have
\begin{align*}
  \lefteqn{
  \int_{ \mathbb{S}^{d-1} \times (\RR^d \setminus \{ 0 \}) } f( s, m ) \, Q(\ds, \dm)
  } \\
  &= \int_{ \mathbb{S}^{d-1} \times (\RR^d \setminus \{ 0 \}) } f( m^* / \norm{m^*}, s^* / \norm{m^*} ) \, \norm{m^*}^\alpha \, P^*(\ds^*, \dm^*) \\
  &= \int_{ \mathbb{S}^{d-1} \times (\RR^d \setminus \{ 0 \}) } g( s^*, m^* ) \, P^*(\ds^*, \dm^*) \\
  &= \int_{ \mathbb{S}^{d-1} \times (\RR^d \setminus \{ 0 \}) } g( m / \norm{m}, s / \norm{m} ) \, \norm{m}^\alpha \, P(\ds, \dm) \\
  &= \int_{ \mathbb{S}^{d-1} \times (\RR^d \setminus \{ 0 \}) } f(s, m) \, P(\ds, \dm),
\end{align*}
where we used \eqref{eq:adjoint:g2f} in the last step. It follows that $Q$ and $P$ coincide on $\mathbb{S}^{d-1} \times (\RR^d \setminus \{ 0 \})$. As $Q$ and $P$ also induce the same marginal distributions on $\mathbb{S}^{d-1}$, it follows that they must also coincide on $\mathbb{S}^{d-1} \times \{ 0 \}$. As a consequence, $Q$ is equal to $P$.
\end{proof} 

The next lemma shows that the class $\mathcal{M}_\alpha$ and the adjoint relation on it arise naturally in the context of regularly varying Markov chains.

\begin{lem}
\label{L:M1}
Let $\{ X_t : t \in \ZZ\}$ be a stationary Markov chain with distribution determined by \eqref{E:MC:1}, \eqref{E:MC:2} and \eqref{E:MC:3}. If Conditions~\ref{C:RV} and \ref{C:phi} hold, then $\law(M_0, M_1)$ belongs to $\mathcal{M}_\alpha$ and its adjoint is equal to $\law(M_0, M_{-1})$.
\end{lem}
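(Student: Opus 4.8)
The plan is to verify the two assertions in the order stated: first that $\law(M_0, M_1) \in \M_\alpha$, and then that its adjoint measure equals $\law(M_0, M_{-1})$. Both should follow essentially by unwinding Definitions~\ref{def:admissible} and \ref{def:adjoint} against the one-step time-change identity \eqref{timechange:onestep}, which is available from Proposition~\ref{P:BS09:3.1}.

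For admissibility, I would fix a Borel set $S \subset \SS^{d-1}$ and apply \eqref{timechange:onestep} to the function $f(y_0, y_1) = \1_S(y_0) \1\{y_1 \ne 0\}$, which indeed vanishes when $y_0 = 0$. The left-hand side becomes $\Pr(M_{-1} \in S,\, M_0 \ne 0)$, which is bounded above by $\Pr(M_{-1} \in S) = \law(M_0, M_1)(S \times \RR^d)$ using stationarity of the spectral process's marginal $\law(M_{-1}) = \law(M_0)$; one must take care here that the first coordinate of $(M_0,M_1)$ lives on $\SS^{d-1}$ (it has norm one) while the first coordinate of $(M_{-1},M_0)$ as written in \eqref{timechange:onestep} is $M_{-1}$, which need not have norm one --- so I should phrase things in terms of $\law(M_0)=\Upsilon$ as the common $\SS^{d-1}$-marginal and note $\|M_0\|=1$ a.s. Meanwhile the right-hand side of \eqref{timechange:onestep} with this $f$ equals $\EV[\1_S(M_0/\|M_1\|)\,\|M_1\|^\alpha \1\{M_1 \ne 0\}]$, which is precisely the left-hand side of the admissibility inequality \eqref{eq:admissible} for $P = \law(M_0, M_1)$. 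Chaining the two gives \eqref{eq:admissible}, hence $P \in \M_\alpha$.

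For the adjoint identity, let $P = \law(M_0, M_1)$ and $P^*$ its adjoint; I want to show $P^* = \law(M_0, M_{-1})$. Since both are probability measures (Lemma~\ref{lem:adjoint}(i) for $P^*$), it suffices to test them against a separating class of functions. I would use \eqref{eq:adjoint:f}: for a bounded measurable $f$ on $\SS^{d-1}\times(\RR^d\setminus\{0\})$,
\[
  \int f(s^*,m^*)\,P^*(\ds^*,\dm^*) = \int f(m/\|m\|,\,s/\|m\|)\,\|m\|^\alpha\,P(\ds,\dm) = \EV\bigl[f(M_1/\|M_1\|,\,M_0/\|M_1\|)\,\|M_1\|^\alpha \1\{M_1\ne 0\}\bigr].
\]
Now apply \eqref{timechange:onestep} to the function $(y_0,y_1)\mapsto f(y_1/\|y_1\|,\,y_0/\|y_1\|)\1\{y_1\ne 0\}$ (which vanishes at $y_0=0$, as required, since then the argument is still well-defined but we can also note the integrand is anyway controlled; more carefully, I pick $f$ with $f(\cdot,0)$ irrelevant since the indicator $\1\{y_1\ne 0\}$ already restricts to $y_1\ne 0$ and the $\|M_1\|^\alpha$ factor kills the $y_1=0$ contribution): the right-hand side of \eqref{timechange:onestep} is exactly the expectation just displayed, and the left-hand side is $\EV[f(M_0/\|M_0\|,\, M_{-1}/\|M_0\|)] = \EV[f(M_0, M_{-1})\1\{M_{-1}\ne 0\}]$, using $\|M_0\|=1$ a.s.\ and the fact that $f$ is only defined (and the integrand only nonzero) where the second argument $M_{-1}\ne 0$. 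Hence $\int f\,\dd P^* = \EV[f(M_0,M_{-1})\1\{M_{-1}\ne 0\}]$ for all such $f$, which pins down $P^*$ on $\SS^{d-1}\times(\RR^d\setminus\{0\})$ as the restriction of $\law(M_0,M_{-1})$ there. Finally, since by Lemma~\ref{lem:adjoint}(i) $P^*$ and $\law(M_0,M_{-1})$ induce the same $\SS^{d-1}$-marginal (both equal to $\Upsilon=\law(M_0)$, the former by Lemma~\ref{lem:adjoint}(i) and the latter by stationarity), the two measures must also agree on $\SS^{d-1}\times\{0\}$, so $P^* = \law(M_0,M_{-1})$.

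The main obstacle I anticipate is bookkeeping rather than conceptual: matching the coordinate conventions between \eqref{timechange:onestep} (whose first argument $M_{-1}$ is not normalized) and Definitions~\ref{def:admissible}--\ref{def:adjoint} (which are stated for measures on $\SS^{d-1}\times\RR^d$), and justifying that one may ignore the $\{M_{-1}=0\}$ and $\{M_1=0\}$ parts when testing against $f$ defined only on $\SS^{d-1}\times(\RR^d\setminus\{0\})$ --- the mass on $\{0\}$-fibers is then recovered from equality of the $\SS^{d-1}$-marginals, exactly as in the proof of Lemma~\ref{lem:adjoint}(iv). One should also double-check the integrability/sign issues so that \eqref{eq:adjoint:f} applies (here everything is bounded or nonnegative, so this is routine). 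No deeper input beyond \eqref{timechange:onestep} and Lemma~\ref{lem:adjoint} is needed.
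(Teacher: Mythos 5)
Your argument for the adjoint relation is sound and matches the paper's proof in spirit: both pass through \eqref{timechange:onestep} applied to a suitably re-indexed $f$ (the paper works through \eqref{eq:adjoint:E} directly, you work through \eqref{eq:adjoint:f}, but it is the same computation), then use agreement of $\SS^{d-1}$-marginals to get equality on the $\{0\}$-fiber.

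The admissibility part, however, contains a genuine error. With your choice $f(y_0,y_1)=\1_S(y_0)\1\{y_1\neq 0\}$, the right-hand side of \eqref{timechange:onestep} is
\[
\EV\bigl[\1_S(M_0/\|M_1\|)\,\|M_1\|^\alpha\,\1\{M_1\neq 0\}\bigr],
\]
which is \emph{not} the left-hand side of \eqref{eq:admissible} for $P=\law(M_0,M_1)$; the latter is
\[
\EV\bigl[\1_S(M_1/\|M_1\|)\,\|M_1\|^\alpha\,\1\{M_1\neq 0\}\bigr].
\]
You have substituted $M_0/\|M_1\|$ where you need $M_1/\|M_1\|$. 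In addition, your chain uses the claim $\Pr(M_{-1}\in S)=\Pr(M_0\in S)$ via ``stationarity of the spectral process's marginal,'' but the spectral process is not stationary: $M_0$ lives on $\SS^{d-1}$, while $M_{-1}$ generally does not (it may even put positive mass at $0$), so $\law(M_{-1})\neq\law(M_0)$ in general. Both errors vanish if you instead take $f(y_0,y_1)=\1_S(y_1)\,\1\{y_0\neq 0\}$: then the LHS of \eqref{timechange:onestep} is $\Pr(M_0\in S,\,M_{-1}\neq 0)\leq\Pr(M_0\in S)$, and the RHS is exactly $\EV[\1_S(M_1/\|M_1\|)\|M_1\|^\alpha\1\{M_1\neq 0\}]$ (using $\|M_0\|=1$ a.s.\ so that $M_0/\|M_1\|\neq 0$ whenever $M_1\neq 0$). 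With that correction, your time-change route to admissibility is valid, and it is in fact shorter than the paper's, which instead proves $\EV[f(M_1/\|M_1\|)\|M_1\|^\alpha]\leq\EV[f(M_0)]$ for continuous $f$ from first principles --- a regular-variation limsup argument using stationarity of $\{X_t\}$ and the Pareto variable $Y$, followed by monotone convergence in $\delta$ and an approximation step for indicators.
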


\begin{proof}
To prove admissibility, we have to show that
\begin{equation}
\label{eq:markov-admissible:S}
  \EV[ \1_S( M_1 / \norm{M_1} ) \, \norm{M_1}^\alpha ] \le \Pr( M_0 \in S )
\end{equation}
for every Borel set $S \subset \mathbb{S}^{d-1}$. Let $f$ be a bounded, nonnegative and continuous function on $\mathbb{S}^{d-1}$. We will show that
\begin{equation}
\label{eq:markov-admissible:f}
  \EV[ f( M_1 / \norm{M_1} ) \, \norm{M_1}^\alpha ] \le \EV[ f(M_0) ].
\end{equation}
Equation~\eqref{eq:markov-admissible:f} implies \eqref{eq:markov-admissible:S} for closed sets $S$ because the indicator function of a closed set $S$ can be written as the pointwise limit of a decreasing sequence of continuous functions taking values in the interval $[0, 1]$. From this we arrive at \eqref{eq:markov-admissible:S} for an arbitrary Borel set $S$ by invoking an increasing sequence of closed sets $S_n$ contained in $S$ such that $\EV[ \1_{S_n}( M_1 / \norm{M_1} ) \, \norm{M_1}^\alpha ]$ and $\Pr( M_0 \in S_n )$ converge to $\EV[ \1_S( M_1 / \norm{M_1} ) \, \norm{M_1}^\alpha ]$ and $\Pr( M_0 \in S )$ respectively; see for instance Theorem~1.1 on p.~7 in \citet{Bi68}.

Let $\delta > 0$. By stationarity of $\{X_t:t \in \ZZ\}$ and by definition of the spectral process $\{M_t:t \in \ZZ\}$, we have
\begin{align*}
  \EV[ f(M_0) ]
  &= \lim_{x \to \infty} \EV[ f( X_1 / \norm{X_1} ) \mid \norm{X_1} > x ] \\
  &\ge \limsup_{x \to \infty} 
  \EV[ \1_{ \{ \norm{X_0} > \delta x \} } \, f( X_1 / \norm{X_1} ) \mid \norm{X_1} > x ] \\
  &= \limsup_{x \to \infty} 
  \frac{\Pr[ \norm{X_0} > \delta x ]}{\Pr[ \norm{X_1} > x]} \, \EV[ f( X_1 / \norm{X_1} ) \, \1_{ \{ \norm{X_1} > x \} } \mid \norm{X_0} > \delta x ] \\
  &= \delta^{-\alpha} \EV[ f( M_1 / \norm{M_1} ) \, \1_{ \{ Y \norm{M_1} > \delta^{-1} \} } ].
\end{align*}
In the last line, $Y$ is a Pareto($\alpha$) random variable, independent of $M_1$. As $\Pr( Y \norm{M_1} = \delta^{-1} ) = 0$ by continuity of the law of $Y$, the last equality in the above display follows from the continuous mapping theorem.

Since the distribution of $Y^{-\alpha}$ is uniform on the interval $(0, 1)$, we have
\begin{eqnarray*}
 \delta^{-\alpha} \EV[ f( M_1 / \norm{M_1} ) \, \1_{ \{ Y \norm{M_1} > \delta^{-1} \} } ] &=& \delta^{-\alpha} \EV[ f( M_1 / \norm{M_1} ) \, \1_{ \{ \delta^\alpha \norm{M_1}^\alpha > Y^{-\alpha} \} } ] \\
  &=& \delta^{-\alpha} \EV[\EV[ f( M_1 / \norm{M_1} ) \, \1_{ \{ \delta^\alpha \norm{M_1}^\alpha > Y^{-\alpha} \} }|M_1] ] \\
  &=& \delta^{-\alpha} \EV[ f( M_1 / \norm{M_1} ) \, \min( \delta^\alpha \norm{M_1}^\alpha, 1 ) ] \\
  &=& \EV[ f(M_1 / \norm{M_1} ) \, \min( \norm{M_1}^\alpha, \delta^{-\alpha} ) ].
\end{eqnarray*}
We obtain that for every $\delta > 0$,
\[
  \EV[ f(M_0) ] \ge \EV[ f(M_1 / \norm{M_1} ) \, \min( \norm{M_1}^\alpha, \delta^{-\alpha} ) ].
\]
Take the limit as $\delta \to 0$ and apply the monotone convergence theorem to obtain \eqref{eq:markov-admissible:f}. 

Next we show that the adjoint of $\law(M_0, M_1)$ is equal to $\law(M_0, M_{-1})$. We have to check the two equations
\begin{align}
\nonumber
  \Pr((M_0,M_{-1}) \in S \times \{0\})
  &=\Pr(M_0 \in S)-\EV[\1_{\mathbb{R}^d \setminus\{0\}}(M_1)\1_S(M_1/\|M_1\|)\|M_1\|^\alpha], \\
\label{Eq:lemM2:2}
  \Pr((M_0,M_{-1}) \in E)
  &=\EV[\1_{\mathbb{R}^d \setminus\{0\}}(M_1)\1_E(M_1/\|M_1\|,M_0/\|M_1\|)\|M_1\|^\alpha],
\end{align}
for all Borel sets $S \subset \mathbb{S}^{d-1}$ and $E \subset \mathbb{S}^{d-1}\times (\mathbb{R}^d\setminus\{0\})$. Since the first component $M_0$ is common to both laws, it is sufficient to check only the second equation, \eqref{Eq:lemM2:2}.

Set $f(m_{-1},m_0)=\1_E(m_0,m_{-1})$ on $\mathbb{R} \times \mathbb{S}^{d-1}$. Note that $f(0,m_0)=0$. Apply equation \eqref{timechange:onestep} to $f$:
\begin{align*} 
  \Pr((M_0,M_{-1}) \in E)
  &= \EV[f(M_{-1},M_0)] \\
  &= \EV[f(M_0/\|M_1\|,M_1/\|M_1\|)\|M_1\|^\alpha\1_{\{M_1 \neq 0\}}] \\
  &= \EV[\1_{\mathbb{R}^d \setminus\{0\}}(M_1)\1_E(M_1/\|M_1\|,M_0/\|M_1\|)\|M_1\|^\alpha],
\end{align*}
which gives \eqref{Eq:lemM2:2}, as required. 
\end{proof}

\begin{rem}\label{partsimple} The determination of the adjoint measure is particularly simple for probability measures $P$ such that 
\begin{equation}
\label{eq:partsimple}
  \int_{\mathbb{S}^{d-1} \times \mathbb{R}^d}\|m\|^\alpha P(\ds, \dm)=1,  
\end{equation}
since in this case $P^*(\mathbb{S}^{d-1} \times \{ 0 \})=0$ by \eqref{eq:adjoint:S0} and $P^*$ is completely described by \eqref{eq:adjoint:E}.
\end{rem}

\begin{rem}\label{selfadjoint}
We call a measure $P \in \mathcal{M}_\alpha$ \emph{self-adjoint} if $P^\ast=P$. An example for such a distribution in the case of $d=1$ and $\alpha=1$ is given by $P=\mathcal{L}(1,Y)$, where $Y=\exp(X-1/2)$ for standard normally distributed $X$ (cf.\ Example~3.2 in \cite{S07}).
 \end{rem}

Definition~\ref{def:adjoint} and Lemma~\ref{lem:adjoint} generalize Proposition~3.1 in \cite{S07} to the multivariate case. Examples~3.2--3.4 in the latter reference illustrate the adjoint relation for laws on $\{-1, +1\} \times \mathbb{R}$. We conclude the section with a multivariate example.

\begin{ex}
\label{Ex:mult:1}
Let $\alpha > 0$ and let $P$ be the law of $(C, R Q C)$ with $C$, $R$ and $Q$ independent, $C$ taking values in $\mathbb{S}^{d-1}$, $R$ a positive random variable with $\EV[R^\alpha] = 1$, and $Q$ a random orthogonal $d \times d$ matrix, that is $Q' = Q^{-1}$ a.s.; also assume that the laws of $C$ and $QC$ are the same (cf.\ also Example \ref{Ex:KestenRDE}). One verifies easily that $P \in \mathcal{M}_\alpha$ and that \eqref{eq:partsimple} holds, so that the adjoint law $P^*$ is concentrated on $\mathbb{S}^{d-1} \times (\mathbb{R}^d \setminus \{ 0 \})$. It may thus be derived from \eqref{eq:adjoint:E} that for Borel sets $S \subset \mathbb{S}^{d-1}$ and $T \subset \mathbb{R}^d \setminus \{ 0 \}$,
\begin{equation}
\label{E:Ex:mult:1}
  P^*(S \times T)=\EV[\1_S(QC)\1_{T}(C/R)R^\alpha].
\end{equation}
If we assume in addition that $C$ is uniformly distributed on $\mathbb{S}^{d-1}$ (which readily implies $\mathcal{L}(C)=\mathcal{L}(QC)$ for any law of $Q$), then
\begin{eqnarray*}
\EV[\1_S(QC)\1_{T}(C)]&=&\EV\left[\int_{\mathbb{R}^{d \times d}} \1_S(qC)\1_{T}(C)P^Q(dq)\right] \\
&=& \EV\left[\int_{\mathbb{R}^{d \times d}} \1_S(C)\1_{T}(q'C)P^Q(dq)\right] \\
&=& \EV[\1_S(C)\1_{T}(Q'C)]
\end{eqnarray*}
and it follows from \eqref{E:Ex:mult:1} that $P^*$ is the law of $(C^*, R^* Q^* C^*)$, with $C^*$, $R^*$ and $Q^*$ independent, $\law(C^*) = \law(C)$, $\law(Q^*) = \law(Q')$, and the law of $R^* > 0$ given by $\EV[f(R^*)] = \EV[f(1/R) \, R^\alpha]$ for measurable functions $f$ on $(0, \infty)$.
\end{ex}

\section{Back-and-forth tail chains  and the spectral process}
\label{S:BFTC}

In this section, we will analyze a certain class of discrete-time processes which are constructed from a pair of adjoint distributions. We will see that this class of processes fulfills equation \eqref{E:timechange} for all $i,s,t \in \mathbb{Z}$ with $s \leq 0 \leq t$. 

\begin{defn}
\label{D:BFTC}
A $d$-dimensional discrete-time process $\{M_t:t \in \ZZ\}$ is called a \emph{back-and-forth tail chain} with index $\alpha>0$, notation $\BFTC(\alpha)$, if the following properties hold:
\begin{enumerate}[(i)]
\item $\law(M_0, M_1)$ and $\law(M_0, M_{-1})$ belong to $\mathcal{M}_\alpha$ and are adjoint;
\item the forward process $\{M_t:t \in \NN_0\}$ is a Markov chain with respect to the filtration \linebreak $\sigma( M_s, -\infty < s \le t)$, $t \ge 0$, and the Markov kernel satisfies
\begin{multline*}
  \Pr( M_t \in \,\cdot\, \mid M_{t-1} = x_{t-1}) \\
  = \begin{cases} 
      \delta_0(\,\cdot\,) & \text{if $x_{t-1} = 0$,} \\
      \Pr( \norm{x_{t-1}} M_1 \in \,\cdot\, \mid M_0 = x_{t-1}/\norm{x_{t-1}} ) & \text{if $x_{t-1} \neq 0$;}
    \end{cases}
\end{multline*}
\item the backward process $\{M_{-t}:t \in \NN_0\}$ is a Markov chain with respect to the filtration $\sigma( M_{-s}, -\infty < s \le t)$, $t \ge 0$, and the Markov kernel satisfies
\begin{multline*}
  \Pr( M_{-t} \in \,\cdot\, \mid M_{-t+1} = x_{-t+1}) \\
  = \begin{cases} 
      \delta_0(\,\cdot\,) & \text{if $x_{-t+1} = 0$,} \\
      \Pr( \norm{x_{-t+1}} M_{-1} \in \,\cdot\, \mid M_0 = x_{-t+1}/\norm{x_{-t+1}} ) & \text{if $x_{-t+1} \neq 0$.}
    \end{cases}
\end{multline*}
\end{enumerate}
\end{defn}
 
Clearly, $\{ M_t : t \in \ZZ \}$ is a $\BFTC(\alpha)$ if and only if $\{ M_{-t} : t \in \ZZ \}$ is a $\BFTC(\alpha)$. The distribution of a BFTC($\alpha$) is completely determined by an admissible law of $(M_0,M_1)$ (and $\alpha>0$). 


The fact that the distributions $P=\mathcal{L}(M_0, M_1)$ and $P^\ast=\mathcal{L}(M_0, M_{-1})$ are adjoint in $\mathcal{M}_\alpha$ implies that for every measurable function $f : \RR^d \times \mathbb{S}^{d-1} \to \RR$ such that $f(0, s) = 0$ for all $s \in \mathbb{S}^{d-1}$, we have
 \begin{align}
   \EV [ f ( M_{-1}, M_0 ) ] \nonumber
  &= \int_{ \mathbb{S}^{d-1} \times (\RR^d \times \{ 0 \}) } f( m, s ) \, P^\ast(\ds, \dm) \nonumber \\
   &= \int_{ \mathbb{S}^{d-1} \times (\RR^d \times \{ 0 \}) } f( s / \norm{m}, m / \norm{m} ) \, \norm{m}^\alpha \, P(\ds, \dm) \nonumber \\
   &= \EV \left[ f \left( \frac{M_0}{\norm{M_1}}, \frac{M_1}{\norm{M_1}} \right) \, \norm{M_1}^\alpha \, \1_{ \{ M_1 \ne 0 \} } \right],
 \label{eq:bftc:adjoint}
 \end{align}
in the sense that if one expectation exists, then so does the other, the two expectations being equal. This corresponds to equation \eqref{timechange:onestep} which originally motivated the definition of an adjoint distribution. The above formula is the special case $s = 1$ and $t = 0$ of the following result.

\begin{prop}
\label{P:BFTC}
Let $\{M_t:t \in \ZZ\}$ be a $\BFTC(\alpha)$. For all integer $s, t \ge 0$ and for all measurable functions $f : (\RR^d)^{s+1+t} \to \RR$ vanishing on $\{ 0 \} \times (\RR^d)^{s+t}$, the $s+1$ numbers
\begin{equation}
\label{E:BFTC}
  \EV \biggl[ f \biggl( \frac{M_{-s+i}}{\norm{M_i}}, \ldots, \frac{M_{t+i}}{\norm{M_i}} \biggr) \, \norm{M_i}^\alpha \, \1_{ \{ M_i \ne 0 \} } \biggr], \qquad
  i = 0, \ldots, s,
\end{equation}
are all the same, in the sense that if one integral exists, then they all exist and they are equal.
\end{prop}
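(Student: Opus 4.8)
The plan is to fix $s,t\ge 0$ and $f$ vanishing on $\{0\}\times(\RR^d)^{s+t}$, and to show that consecutive terms in \eqref{E:BFTC} agree, i.e.\ that the $i$-th term equals the $(i+1)$-th term for each $i=0,\ldots,s-1$; transitivity then gives the full claim. The key observation is that the indexing in \eqref{E:BFTC} is arranged so that moving from index $i$ to index $i+1$ amounts to one application of the one-step adjoint identity \eqref{eq:bftc:adjoint}, but applied to the \emph{shifted} block $(M_{-s+i},\ldots,M_{t+i})$ rather than to $(M_{-1},M_0)$ alone. So the first step is to reduce, by a conditioning and change-of-index argument, the equality of two consecutive terms of \eqref{E:BFTC} to an equality of the form ``$\EV[g(M_{-1},M_0,\text{forward block})]=\EV[\tilde g(M_0,M_1,\text{forward block})\,\norm{M_1}^\alpha\1_{\{M_1\ne 0\}}]$'' for a suitable $g$, which is exactly \eqref{eq:bftc:adjoint} with an extra forward-in-time tail attached.

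Concretely, I would proceed as follows. First, using the backward Markov property (Definition~\ref{D:BFTC}(iii)) iterated $s-1$ times, rewrite the block $(M_{-s+i},\ldots,M_{i-1})$ conditionally on $M_i$ (when $M_i\ne 0$): on the event $\{M_i\ne 0\}$, the conditional law of $(M_{-s+i}/\norm{M_i},\ldots,M_{i-1}/\norm{M_i})$ given $M_i=x$ depends only on $x/\norm{x}$ and is, after rescaling, the conditional law of $(M_{-s},\ldots,M_{-1})$ given $M_0=x/\norm{x}$; likewise use the forward Markov property (Definition~\ref{D:BFTC}(ii)) for the block $(M_{i+1},\ldots,M_{t+i})$ given $M_i$. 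This turns the $i$-th term of \eqref{E:BFTC} into an expression of the form
\[
  \EV\Bigl[ h\bigl( M_i/\norm{M_i} \bigr)\,\norm{M_i}^\alpha\,\1_{\{M_i\ne 0\}} \Bigr]
\]
where $h(\theta)=\EV_\theta[ f(M_{-s},\ldots,M_{-1},\theta,M_1,\ldots,M_t) ]$ is the expectation under the BFTC started from $M_0=\theta$; the point is that $h$ does not depend on $i$. The second step is then to show that $\EV[ h(M_i/\norm{M_i})\,\norm{M_i}^\alpha\1_{\{M_i\ne 0\}} ]$ is the same for all $i=0,\ldots,s$; by induction it suffices to pass from $i$ to $i+1$, and this is precisely an application of the adjoint/time-change identity \eqref{eq:bftc:adjoint}, now with the function $f(m,\theta):=h(\theta)$-type object depending on two consecutive coordinates $(M_{-1},M_0)$ shifted to $(M_{i},M_{i+1})$ — one needs $f$ vanishing when its first argument is $0$, which holds because of the $\1_{\{M_i\ne 0\}}$ factor and the hypothesis on $f$.

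The main obstacle is bookkeeping: making the conditioning-and-rescaling in the first step rigorous when $M_i=0$ is possible (then the chain is absorbed at $0$ for all later times, by the Markov kernels, and the $\1_{\{M_i\ne 0\}}$ indicator kills the term anyway), and carefully tracking how the normalization by $\norm{M_i}$ interacts with the shift of the whole window. One must be careful that the adjoint identity \eqref{eq:bftc:adjoint} is stated for bounded $f$ (or nonnegative $f$), so I would first prove the proposition for bounded nonnegative $f$, then for bounded $f$ by taking differences, and finally extend to general measurable $f$ by a monotone-class / monotone-convergence argument (splitting $f$ into positive and negative parts and truncating), exactly in the spirit of the extension argument already used in the proof of Proposition~\ref{P:BS09:3.1}; the ``if one integral exists, then all do and are equal'' formulation is then automatic from that approximation. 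The secondary subtlety is that the Markov property in Definition~\ref{D:BFTC} is stated with respect to the two-sided filtrations, so when I condition the backward block on $M_i$ I should note that, given $M_i$, the past $(M_{-s+i},\ldots,M_{i-1})$ and the future $(M_{i+1},\ldots,M_{t+i})$ are conditionally independent — this follows because the forward chain is Markov w.r.t.\ $\sigma(M_s,-\infty<s\le t)$, so $M_{i+1},\ldots$ depend on $(\ldots,M_{i-1},M_i)$ only through $M_i$ — which is what lets me write the $i$-th term in the clean product form above.
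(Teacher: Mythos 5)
Your first step is where the argument breaks down. You invoke ``the backward Markov property (Definition~\ref{D:BFTC}(iii)) iterated $s-1$ times'' to claim that, on $\{M_i\ne 0\}$ with $i\ge 1$, the conditional law of $(M_{-s+i}/\norm{M_i},\ldots,M_{i-1}/\norm{M_i})$ given $M_i=x$ depends only on $x/\norm{x}$ and coincides with the conditional law of $(M_{-s},\ldots,M_{-1})$ given $M_0=x/\norm{x}$. But Definition~\ref{D:BFTC}(iii) specifies the transition kernel from $M_{-t+1}$ to $M_{-t}$ only for $t\ge 1$, that is, for transitions originating at indices $\le 0$. It gives no information about the conditional law of $M_{i-1}$ given $M_i$ when $i\ge 1$: that is a time reversal of the \emph{forward} chain, which must be obtained from $\law(M_0,\ldots,M_i)$ by Bayes' rule, and the scaling and shift-invariance you assign to it is essentially the content of Proposition~\ref{P:BFTC} — so Step~1 is circular. (A BFTC is not a time-homogeneous two-sided chain: the laws $\law(M_t)$ vary with $t$, and the two kernels are anchored at time $0$.) Your Step~2 has a related issue: passing from $i$ to $i+1$ is not simply ``an application of \eqref{eq:bftc:adjoint}''. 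For $i\ge 1$ it would require the pointwise identity $h(\theta)=\EV\left[h(M_1/\norm{M_1})\norm{M_1}^\alpha\1_{\{M_1\ne 0\}}\mid M_0=\theta\right]$ for a.e.\ $\theta$, an invariance of $h$ under the tilted forward kernel which the one-step adjoint relation does not deliver (it only gives the equality after integrating $\theta$ over $\law(M_0)$).

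The paper avoids both difficulties by a different reduction. Suppose the equality of the $i=0$ and $i=1$ terms in \eqref{E:BFTC}, namely \eqref{E:BFTC:i=1}, has been established for all $(s,t)$ and all admissible $f$. Then for any $i$, absorb $\norm{M_i}^\alpha\1_{\{M_i\ne 0\}}$ into a new function $g$ that still vanishes on $\{0\}\times(\RR^d)^{s+t}$, rewrite the $i$-th term as $\EV[g(M_{-s+i},\ldots,M_{t+i})]$, apply \eqref{E:BFTC:i=1} with the shifted indices $(\tilde s,\tilde t)=(s-i,t+i)$, and unwind $g$ using the homogeneity $\norm{M_{i+1}/\norm{M_1}}^\alpha\norm{M_1}^\alpha=\norm{M_{i+1}}^\alpha$; this produces the $(i+1)$-th term without ever conditioning on $M_i$. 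The remaining identity \eqref{E:BFTC:i=1} is proved by a double induction: over $t$ with $s=1$ (base case \eqref{eq:bftc:adjoint}, induction step via the scaling property \eqref{eq:bftc:scaling:forward} of the forward kernel in Definition~\ref{D:BFTC}(ii)), and then over $s$ (induction step via the scaling property \eqref{eq:bftc:scaling:backward} of the backward kernel, applied only at indices $\le 0$ where Definition~\ref{D:BFTC}(iii) actually holds). To rescue your plan you would need to replace Step~1 by this reindex-and-rescale device.
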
 
\begin{proof}
For $s = 0$ there is nothing to prove, so assume that $s \ge 1$. By definition of the integral, it is sufficient to consider the case where $f$ is nonnegative, in which case the expectations in \eqref{E:BFTC} are always well-defined, possibly equal to infinity.

\emph{Reduction to the case $i \in \{ 0, 1 \}$.}
Suppose first that we can show that the numbers corresponding to $i = 0$ and $i = 1$ in \eqref{E:BFTC} are equal, that is (note that $\|M_0\|=1$),
\begin{equation}
\label{E:BFTC:i=1}
  \EV [ f ( M_{-s}, \ldots, M_t ) ] 
  = \EV \biggl[ f \biggl( \frac{M_{-s+1}}{\norm{M_1}}, \ldots, \frac{M_{t+1}}{\norm{M_1}} \biggr) \, \norm{M_1}^\alpha \, \1_{ \{ M_1 \ne 0 \} } \biggr].
\end{equation}
Take arbitrary $i = 0, \ldots, s-1$. Note that
\[
  \EV \biggl[ f \biggl( \frac{M_{-s+i}}{\norm{M_i}}, \ldots, \frac{M_{t+i}}{\norm{M_i}} \biggr) \, \norm{M_i}^\alpha \, \1_{ \{ M_i \ne 0 \} } \biggr]
  = \EV [ g( M_{-s+i}, \ldots, M_{t+i} )]
\]
for a measurable function $g : (\RR^d)^{s+1+t} \to \RR$ with
that vanishes as soon as its first $d$-tuple of arguments is zero. By \eqref{E:BFTC:i=1} applied to $\tilde{s}=s-i$ and $\tilde{t}=t+i$, we find
\[
  \EV [ g( M_{-s+i}, \ldots, M_{t+i} )]
  = \EV \biggl[ g \biggl( \frac{M_{-s+i+1}}{\norm{M_1}}, \ldots, \frac{M_{t+i+1}}{\norm{M_1}} \biggr) \, \norm{M_1}^\alpha \, \1_{ \{ M_1 \ne 0 \} } \biggr].
\]
By definition of $g$, if $M_1 \ne 0$, then
\begin{align*}
  \lefteqn{
  g \biggl( \frac{M_{-s+i+1}}{\norm{M_1}}, \ldots, \frac{M_{t+i+1}}{\norm{M_1}} \biggr)
  } \\
  &= f \biggl( \frac{M_{-s+i+1} / \norm{M_1}}{\norm{(M_{i+1} / \norm{M_1})}}, \ldots, \frac{M_{t+i+1} / \norm{M_1}}{\norm{(M_{i+1} / \norm{M_1})}}  \biggr) \, \norm{(M_{i+1} / \norm{M_1})}^\alpha \, \1_{ \{ M_{i+1} \ne 0 \} } \\
  &= f \biggl( \frac{M_{-s+i+1}}{\norm{M_{i+1}}}, \ldots, \frac{M_{t+i+1}}{\norm{M_{i+1}}} \biggr) \, \frac{\norm{M_{i+1}}^\alpha}{\norm{M_1}^\alpha} \, \1_{ \{ M_{i+1} \ne 0 \} }.
\end{align*}
Combine the previous three displays to see that
\begin{multline*}
  \EV \biggl[ f \biggl( \frac{M_{-s+i}}{\norm{M_i}}, \ldots, \frac{M_{t+i}}{\norm{M_i}} \biggr) \, \norm{M_i}^\alpha \, \1_{ \{ M_i \ne 0 \} } \biggr] \\
  = \EV \biggl[ f \biggl( \frac{M_{-s+i+1}}{\norm{M_{i+1}}}, \ldots, \frac{M_{t+i+1}}{\norm{M_{i+1}}} \biggr) \, \norm{M_{i+1}}^\alpha \, \1_{ \{ M_1 \ne 0, M_{i+1} \ne 0 \} } \biggr].
\end{multline*}
By definition of the forward chain $(M_t)_{t \ge 0}$, we have $M_{i+1} = 0$ as soon as $M_1 = 0$. As a consequence, we may the suppress the event $\{ M_1 \ne 0 \}$ in the indicator function on the right-hand side, and thus
\begin{multline*}
  \EV \biggl[ f \biggl( \frac{M_{-s+i}}{\norm{M_i}}, \ldots, \frac{M_{t+i}}{\norm{M_i}} \biggr) \, \norm{M_i}^\alpha \, \1_{ \{ M_i \ne 0 \} } \biggr] \\
  = \EV \biggl[ f \biggl( \frac{M_{-s+i+1}}{\norm{M_{i+1}}}, \ldots, \frac{M_{t+i+1}}{\norm{M_{i+1}}} \biggr) \, \norm{M_{i+1}}^\alpha \, \1_{ \{ M_{i+1} \ne 0 \} } \biggr].
\end{multline*}
We conclude that in order to show \eqref{E:BFTC}, it is enough to show \eqref{E:BFTC:i=1}. We will show \eqref{E:BFTC:i=1} by induction on $s \ge 1$.

\emph{Proof of \eqref{E:BFTC:i=1} if $s = 1$.}
We have to show that
\begin{equation}
\label{E:BFTC:i=1,s=1}
  \EV [ f ( M_{-1}, \ldots, M_t ) ] 
  = \EV \biggl[ f \biggl( \frac{M_0}{\norm{M_1}}, \ldots, \frac{M_{t+1}}{\norm{M_1}} \biggr) \, \norm{M_1}^\alpha \, \1_{ \{ M_1 \ne 0 \} } \biggr].
\end{equation}
We will proceed by induction on $t \ge 0$. 

The case $t = 0$ is nothing more than the adjoint relation between the laws of $(M_0, M_1)$ and $(M_0, M_{-1})$, see \eqref{eq:bftc:adjoint}. 

Let $t \ge 1$ and let \eqref{E:BFTC:i=1,s=1} be fulfilled for $t-1$.
By the Markov property,
\[
  \EV [ f ( M_{-1}, \ldots, M_t ) ]
  = \EV [ g ( M_{-1}, \ldots, M_{t-1} ) ]
\]
with
\[
  g( m_{-1}, \ldots, m_{t-1} ) = \EV \{ f ( m_{-1}, \ldots, m_{t-1}, M_t ) \mid M_{t-1} = m_{t-1} \}
\]
As $g(0, m_0, \ldots, m_{t-1}) = 0$, we can apply the induction hypothesis, yielding
\[
  \EV [ g ( M_{-1}, \ldots, M_{t-1} ) ]
  = \EV [ g ( M_0 / \norm{M_1}, \ldots, M_t / \norm{M_1} ) \, \norm{M_1}^\alpha \, \1_{ \{ M_1 \ne 0 \} } ].
\]
The defining property of a $\BFTC$ implies that for every $c > 0$, for every integer $r \ge 1$ and for every nonnegative, measurable function $h$ on $\RR^d$,
\begin{equation}
\label{eq:bftc:scaling:forward}
 \EV [ h( c M_r ) \mid M_{r-1} = m/c ]
  = \begin{cases} h(0) & \text{if $m = 0$,} \\ \EV [ h( \norm{m} M_1 ) \mid M_0 = m / \norm{m} ] & \text{if $m \ne 0$,} \end{cases}
\end{equation}
the right-hand side not depending on the scaling constant $c$ nor on the time index $r$. It follows that if $m_1 \ne 0$,
\begin{align*}
  \lefteqn{
  g ( m_0 / \norm{m_1}, \ldots, m_t / \norm{m_1} )
  } \\
  &= \EV [ f ( m_0 / \norm{m_1}, \ldots, m_t / \norm{m_1}, M_t ) \mid M_{t-1} = m_t / \norm{m_1} ] \\
  &= \EV [ f ( m_0 / \norm{m_1}, \ldots, m_t / \norm{m_1}, M_{t+1} / \norm{m_1} ) \mid M_t = m_t ].
\end{align*}
We find that, on the event $\{M_1 \ne 0\}$, by the Markov property,
\begin{multline*}
  g ( M_0 / \norm{M_1}, \ldots, M_t / \norm{M_1} ) \\
  = \EV [ f ( M_0 / \norm{M_1}, \ldots, M_t / \norm{M_1}, M_{t+1} / \norm{M_1} ) \mid M_0, \ldots, M_t ].
\end{multline*}
We can conclude that
\begin{align*}
  \lefteqn{
  \EV [ f ( M_{-1}, \ldots, M_t ) ]
  } \\
  &= \EV [ g ( M_{-1}, \ldots, M_{t-1} ) ] \\
  &= \EV [ g ( M_0 / \norm{M_1}, \ldots, M_t / \norm{M_1} ) \, \norm{M_1}^\alpha \, \1_{ \{ M_1 \ne 0 \} } ] \\
  &= \EV [ f ( M_0 / \norm{M_1}, \ldots, M_t / \norm{M_1}, M_{t+1} / \norm{M_1} ) \, \norm{M_1}^\alpha \, \1_{ \{ M_1 \ne 0 \} } ],
\end{align*}
as required.

\emph{Proof of \eqref{E:BFTC:i=1} for general $s \ge 1$.} 
The case $s = 1$ was treated above. So let $s \ge 2$. By the Markov property, we have
\[
  \EV [ f ( M_{-s}, \ldots, M_t ) ] = \EV [ g ( M_{-s+1}, \ldots, M_t ) ]
\]
with $g : (\RR^d)^{s+t} \to \RR$ a nonnegative, measurable function defined by
\[
  g ( m_{-s+1}, \ldots, m_t )
  = \EV \{ f ( M_{-s}, m_{-s+1}, \ldots, m_t ) \mid M_{-s+1} = m_{-s+1} \}.
\]
Conditionally on $M_{-s+1} = 0$, we have $M_{-s} = 0$, and thus $f ( M_{-s}, \ldots ) = 0$ too. It follows that $g ( 0, m_{-s+2}, \ldots, m_t ) = 0$. By the induction hypothesis, we therefore have
\begin{equation*}
  \EV [ g ( M_{-s+1}, \ldots, M_t ) ]
  = \EV [ g ( M_{-s+2} / \norm{M_1}, \ldots, M_{t+1} / \norm{M_1} ) \, \norm{M_1}^\alpha \, \1_{ \{ M_1 \ne 0 \} } ].
\end{equation*}
As for the forward chain in \eqref{eq:bftc:scaling:forward}, we have for every nonnegative, measurable function $h$ on $\RR^d$ and every $c>0$,
\begin{equation}
\label{eq:bftc:scaling:backward}
 \EV [ h( c M_{-r} ) \mid M_{-r+1} = m/c ]
  = \begin{cases} h(0) & \text{if $m = 0$,} \\ \EV [ h( \norm{m} M_{-1} ) \mid M_0 = m / \norm{m} ] & \text{if $m \ne 0$,} \end{cases}
\end{equation}
the right-hand side not depending on the scaling constant $c > 0$ nor on the time index $r = 1, 2, \ldots$. It follows that for $m_1 \ne 0$, we have
\begin{align*}
  \lefteqn{
  g ( m_{-s+2} / \norm{m_1}, \ldots, m_{t+1} / \norm{m_1} )
  } \\
  &= \EV [ f ( M_{-s}, m_{-s+2} / \norm{m_1}, \ldots, m_{t+1} / \norm{m_1}) \mid M_{-s+1} = m_{-s+2} / \norm{m_1} ] \\
  &= \EV [ f ( M_{-s+1} / \norm{m_1}, m_{-s+2} / \norm{m_1}, \ldots, m_{t+1} / \norm{m_1}) \mid M_{-s+2} = m_{-s+2} ].
\end{align*}
Invoking the Markov property again, we conclude that
\begin{align*}
    \EV [ f ( M_{-s}, \ldots, M_t ) ]
  &= \EV [ g ( M_{-s+1}, \ldots, M_t ) ] \\
  &= \EV [ g ( M_{-s+2} / \norm{M_1}, \ldots, M_{t+1} / \norm{M_1} ) \, \norm{M_1}^\alpha \, \1_{ \{ M_1 \ne 0 \} } ] \\
  &= \EV [ f ( M_{-s+1} / \norm{M_1}, \ldots, M_{t+1} / \norm{M_1} ) \, \norm{M_1}^\alpha \, \1_{ \{ M_1 \ne 0 \} } ],
\end{align*}
as required. This concludes the proof of Proposition \ref{P:BFTC}.
\end{proof}

The following proposition connects BFTCs and spectral processes. 

\begin{prop}
\label{P:BFTCisspectral}
Let $\{Y_t:t \in \mathbb{Z}\}$ be an $\mathbb{R}^d$-valued process and let $\{M_t:t \in \mathbb{Z}\}$ be an $\mathbb{R}^d$-valued BFTC($\alpha)$. If 
\begin{equation}\label{E:forwardthesame}\mathcal{L}(Y_0, \dots, Y_t)=\mathcal{L}(M_0, \dots, M_t)
\end{equation}
for all $t \geq 0$ and if
\begin{equation}
\label{E:timechange:BFTCspectral}
 \EV\left[f(Y_{-s}, \ldots, Y_{t}) \right]= \EV\left[f\left(\frac{Y_0}{\|Y_s\|}, \ldots, \frac{Y_{s+t}}{\|Y_s\|} \right) \|Y_s\|^\alpha\1_{\{Y_s \neq 0\}} \right]
\end{equation}
for all $s,t \geq 0$ and for all bounded and measurable $f:(\mathbb{R}^d)^{s+t+1} \to \mathbb{R}$ satisfying $f(y_{-s}, \ldots, y_t)=0$ whenever $y_{-s}=0$, then
\begin{equation}\label{E:allthesame} \mathcal{L}(Y_{-s}, \dots, Y_t)=\mathcal{L}(M_{-s}, \dots, M_t)
\end{equation}
for all $s,t \geq 0$. 
\end{prop}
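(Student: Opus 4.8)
The plan is to prove \eqref{E:allthesame} by induction on $s \ge 0$, the assertion at level $s$ being that \eqref{E:allthesame} holds for that $s$ and all $t \ge 0$; the base case $s=0$ is precisely the hypothesis \eqref{E:forwardthesame}. For the inductive step I would fix $s \ge 1$ and $t \ge 0$, write $\mu := \law(Y_{-s}, \ldots, Y_t)$ and $\nu := \law(M_{-s}, \ldots, M_t)$ for the two Borel probability measures on $X := (\RR^d)^{s+1+t}$, and set $N := \{ y \in X : y_{-s} = 0 \}$. The goal is then $\mu = \nu$, and I would obtain it by first showing $\mu$ and $\nu$ agree off $N$, then using the induction hypothesis to control the mass on $N$.

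For the first part, let $f : X \to [0,\infty)$ be bounded and measurable with $f \equiv 0$ on $N$. The time-change identity \eqref{E:timechange:BFTCspectral} for $\{Y_t\}$, applied with indices $s$ and $t$, rewrites $\EV[f(Y_{-s}, \ldots, Y_t)]$ as $\EV[g(Y_0, \ldots, Y_{s+t})]$, where $g(z_0, \ldots, z_{s+t}) := f(z_0/\norm{z_s}, \ldots, z_{s+t}/\norm{z_s}) \, \norm{z_s}^\alpha \1_{\{z_s \ne 0\}}$ is a fixed nonnegative measurable function (and all the quantities involved are finite since $f$ is bounded). By \eqref{E:forwardthesame} applied with forward index $s+t$, $\EV[g(Y_0, \ldots, Y_{s+t})] = \EV[g(M_0, \ldots, M_{s+t})]$, which equals $\EV[f(M_0/\norm{M_s}, \ldots, M_{s+t}/\norm{M_s}) \, \norm{M_s}^\alpha \1_{\{M_s \ne 0\}}]$. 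Since $f$ vanishes on $\{0\}\times(\RR^d)^{s+t}$, Proposition~\ref{P:BFTC} applies and identifies this (the $i=s$ term of \eqref{E:BFTC}) with its $i=0$ counterpart $\EV[f(M_{-s}, \ldots, M_t)]$, using $\norm{M_0}=1$ and $M_0 \ne 0$ a.s. Specialising to $f = \1_A$ with $A \subseteq X \setminus N$ Borel gives $\mu(A) = \nu(A)$, i.e.\ $\mu$ and $\nu$ agree on all Borel subsets of $X \setminus N$.

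Next I would observe that the projection $\pi : X \to (\RR^d)^{s+t}$ dropping the coordinate indexed $-s$ pushes $\mu$ and $\nu$ forward to $\law(Y_{-s+1}, \ldots, Y_t)$ and $\law(M_{-s+1}, \ldots, M_t)$, which coincide by the induction hypothesis at level $s-1$. Finally, for an arbitrary Borel $A \subseteq X$ I would split $A = (A\setminus N) \sqcup (A\cap N)$: the first part is covered by the previous paragraph, and for the second I would write $A \cap N = \pi^{-1}(C) \cap N$ with $C := \{ z \in (\RR^d)^{s+t} : (0,z)\in A\}$ Borel, so that $\mu(A\cap N) = \mu(\pi^{-1}(C)) - \mu(\pi^{-1}(C)\setminus N) = \nu(\pi^{-1}(C)) - \nu(\pi^{-1}(C)\setminus N) = \nu(A\cap N)$, using the push-forward identity for the first difference and agreement off $N$ (note $\pi^{-1}(C)\setminus N \subseteq X\setminus N$) for the subtracted terms. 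Hence $\mu(A)=\nu(A)$ for all Borel $A$, completing the induction.

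The routine points are the measurability of $g$ and the bookkeeping of time-shifts when invoking \eqref{E:BFTC}. The one genuinely substantive issue — and the main obstacle — is that the adjoint/time-change machinery only constrains $\law(Y_{-s}, \ldots, Y_t)$ on the set $\{y_{-s}\ne 0\}$; pinning down the mass on $\{y_{-s}=0\}$ is exactly what forces the induction on $s$, feeding in the lower-order marginal $\law(Y_{-s+1}, \ldots, Y_t)$ through the induction hypothesis, rather than allowing a single direct appeal to Proposition~\ref{P:BFTC}.
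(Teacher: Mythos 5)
Your proof is correct and follows essentially the same route as the paper's: induction on $s$, with the part of the law off $N=\{y_{-s}=0\}$ pinned down by the time-change identity for $\{Y_t\}$, the forward-marginal equality \eqref{E:forwardthesame}, and Proposition~\ref{P:BFTC} applied at $i=s$ and $i=0$, and the part on $N$ controlled by the $(s-1)$-level induction hypothesis. The only cosmetic difference is that the paper packages the split as a function decomposition $f = f_1 + f_2$ with $f_1(y)=f(0,y_{-s+1},\ldots,y_t)$ and $f_2$ vanishing on $N$, whereas you decompose Borel sets $A=(A\setminus N)\sqcup(A\cap N)$ and recover $\mu(A\cap N)$ by subtraction via the projection dropping coordinate $-s$; these are equivalent implementations of the same argument.
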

\begin{proof}
The proof relies on the fact that both the process $\{Y_t: t \in \mathbb{Z}\}$ which satisfies \eqref{E:timechange:BFTCspectral} and the BFTC($\alpha$) are uniquely determined by their forward process. Our proof is by induction on $s$. For $s=0$, equation \eqref{E:allthesame} is equal to the assumption \eqref{E:forwardthesame} for all $t \geq 0$. For the induction step, assume that \eqref{E:allthesame} holds for a fixed value of $\tilde{s}=s-1\geq 0$ and all $t \geq 0$. Let $f:(\mathbb{R}^d)^{s+t+1} \to \mathbb{R}$ be a bounded continuous function. Write 
$$f(y_{-s}, \dots, y_t)=f_1(y_{-s}, \dots, y_t)+f_2(y_{-s}, \dots,y_t)$$
with
$$f_1(y_{-s}, \dots, y_t)=f(0,y_{-s+1}, \dots, y_t),$$
$$f_2(y_{-s}, \dots, y_t)=f(y_{-s}, y_{-s+1}, \dots, y_t)-f(0, y_{-s+1}, \dots, y_t).$$
and note that $f_2(0, y_{-s+1}, \dots, y_t)=0$, while the value of $f_1$ does not depend on the first coordinate of the argument.
Then
\begin{eqnarray*}
&& \EV[f(Y_{-s}, \dots, Y_t)]\\
&=& \EV[f_1(Y_{-s}, \dots, Y_t)]+\EV[f_2(Y_{-s}, \dots, Y_t)] \\
&=& \EV[f_1(Y_{-s}, \dots, Y_{t})]+ \EV\left[f_2\left(\frac{Y_0}{\|Y_s\|}, \ldots, \frac{Y_{s+t}}{\|Y_s\|}\right) \|Y_s\|^\alpha\1_{\{Y_s \neq 0\}} \right] \\
&=& \EV[f_1(M_{-s}, \dots, M_{t})]+ \EV\left[f_2\left(\frac{M_0}{\|M_s\|}, \ldots, \frac{M_{s+t}}{\|M_s\|}\right) \|M_s\|^\alpha\1_{\{M_s \neq 0\}} \right] ,
\end{eqnarray*}
where both the induction hypothesis and equations \eqref{E:timechange:BFTCspectral} and \eqref{E:allthesame} have been used. Since $\{M_t: t \in \mathbb{Z}\}$ is a BFTC($\alpha$), we may apply Proposition \ref{P:BFTC} for $i=s$ and $i=0$ (note that $\|M_0\|=1$), so that the above expression is equal to
\begin{equation*} \EV[f_1(M_{-s}, \dots, M_{t})]+ \EV\left[f_2\left(M_{-s}, \ldots, M_{t}\right)\right] =\EV[f(M_{-s}, \dots, M_{t})], 
\end{equation*}
which finishes the induction step and the proof.
\end{proof}
\begin{rem}
Proposition \ref{P:BFTCisspectral} can be read in the following way: Every spectral process $\{M_t:t \in \mathbb{Z}\}$ with a forward process (meaning: $\{M_t:t \in \mathbb{N}_0\}$) which has a BFTC$(\alpha)$ structure, automatically has a BFTC($\alpha$)-backward-distribution as well. This means that a Markovian structure in the forward spectral process (which may also arise in settings where the underlying process is non-Markovian) is enough to secure a Markovian structure of the backward spectral process as well.    
\end{rem}
\begin{cor}
\label{Cor:spectralisBFTC}
Let $\{ X_t : t \in \ZZ\}$ be a stationary Markov chain with distribution determined by \eqref{E:MC:1}, \eqref{E:MC:2} and \eqref{E:MC:3}. Then the corresponding spectral process $\{M_t:t \in \mathbb{Z}\}$ is a BFTC$(\alpha)$. 

We call $\{M_{-t}: t \in \mathbb{N}_0\}$ the \emph{backward tail chain of} $\{X_t:t \in \mathbb{Z}\}$ and $\{M_t: t \in \mathbb{Z}\}$ the \emph{tail chain of} $\{X_t:t \in \mathbb{Z}\}$.
\end{cor}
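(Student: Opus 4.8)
The plan is to identify the spectral process $\{M_t:t\in\ZZ\}$ with a back-and-forth tail chain by assembling four ingredients already in place: the forward description of Theorem~\ref{T:forward}, the admissibility and adjoint relation of Lemma~\ref{L:M1} together with Lemma~\ref{lem:adjoint}, the time-change identity of Proposition~\ref{P:BS09:3.1}, and the rigidity statement of Proposition~\ref{P:BFTCisspectral}. Since being a $\BFTC(\alpha)$ is a property of the finite-dimensional distributions alone, it suffices to produce a $\BFTC(\alpha)$ having the same finite-dimensional distributions as $\{M_t\}$.

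First I would record the forward structure and the admissibility. By Theorem~\ref{T:forward} the forward process $\{M_t:t\in\NN_0\}$ is a Markov chain with $\law(M_0)=\Upsilon$ and recursion $M_j=\phi(M_{j-1},\eps_j)$, where the $\eps_j$ are i.i.d.\ and independent of $M_0$; since $\eps_1$ is independent of $M_0$ and by the extension~\eqref{eq:phi}, conditionally on $M_0=x/\norm{x}$ one has $\norm{x}M_1\eqd\norm{x}\phi(x/\norm{x},\eps_1)=\phi(x,\eps_1)$ for $x\ne 0$, while $\phi(0,\eps_1)=0$, so that the transition kernel of this chain is precisely the one prescribed in Definition~\ref{D:BFTC}(ii). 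By Lemma~\ref{L:M1}, $P:=\law(M_0,M_1)$ lies in $\mathcal{M}_\alpha$ and $P^\ast=\law(M_0,M_{-1})$, and by Lemma~\ref{lem:adjoint}(iii)--(iv) we also have $P^\ast\in\mathcal{M}_\alpha$ and $(P^\ast)^\ast=P$, which gives property~(i) of Definition~\ref{D:BFTC}. Let $\{\tilde M_t:t\in\ZZ\}$ denote the $\BFTC(\alpha)$ associated with the admissible law $P$ (cf.\ the discussion following Definition~\ref{D:BFTC}); its forward process is by construction the Markov chain with initial law $\Upsilon$ and kernel~(ii), whence $\law(\tilde M_0,\ldots,\tilde M_t)=\law(M_0,\ldots,M_t)$ for every $t\ge 0$.

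Next I would bring in the time-change identity. Proposition~\ref{P:BS09:3.1} yields identity~\eqref{E:timechange} for the spectral process, which is exactly hypothesis~\eqref{E:timechange:BFTCspectral} of Proposition~\ref{P:BFTCisspectral} after renaming $Y$ as $M$. Applying Proposition~\ref{P:BFTCisspectral} with $Y_t=M_t$ and the reference chain $\{\tilde M_t\}$ then gives $\law(M_{-s},\ldots,M_t)=\law(\tilde M_{-s},\ldots,\tilde M_t)$ for all $s,t\ge 0$. Consequently $\{M_t:t\in\ZZ\}$ has the same finite-dimensional distributions as the $\BFTC(\alpha)$ $\{\tilde M_t\}$ and is therefore itself a $\BFTC(\alpha)$; the remaining content of the statement is purely terminological.

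I expect the only point requiring care to be the legitimacy of ``the $\BFTC(\alpha)$ associated with the admissible law $P$'': one must verify that such a two-sided process exists, for instance by taking $\tilde M_0\sim\Upsilon$ and, conditionally on $\tilde M_0$, running the forward kernel of~(ii) and the backward kernel of~(iii) independently of each other, and then checking that the resulting process satisfies the filtration-wise Markov properties of Definition~\ref{D:BFTC}(ii)--(iii) and not merely the ones relative to the one-sided filtrations. Everything else is bookkeeping with the identities established above.
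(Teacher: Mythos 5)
Your proof is correct and follows essentially the same route as the paper's: establish that the forward process $\{M_t:t\in\NN_0\}$ matches that of a $\BFTC(\alpha)$ via Theorem~\ref{T:forward}, and then conclude with Proposition~\ref{P:BFTCisspectral}. You usefully make explicit what the paper leaves implicit, namely that Lemma~\ref{L:M1} supplies the admissibility/adjointness needed for Definition~\ref{D:BFTC}(i) and Proposition~\ref{P:BS09:3.1} supplies hypothesis~\eqref{E:timechange:BFTCspectral}; your closing remark on existence of the reference $\BFTC(\alpha)$ is also resolved by the conditional-independence construction you describe.
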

\begin{proof}
The existence of a corresponding spectral process follows from Proposition \ref{P:BS09:2.1}. Furthermore, it follows from Theorem \ref{T:forward} that the forward process $\{M_t:t \in \mathbb{N}_0\}$ is equal in law to the forward process of a BFTC$(\alpha)$. By Proposition \ref{P:BFTCisspectral} the statement follows. 
\end{proof}

\begin{rem}
Since the forward and backward tail chain of a process $\{X_t:t\in \mathbb{Z}\}$ are uniquely determined by the laws of $(M_0, M_1)$ and $(M_0, M_{-1})$, respectively, it follows that the backward tail chain is equal in distribution to the forward tail chain if and only if the law of $(M_0, M_1)$ is self-adjoint (cf.\ Remark \ref{selfadjoint}). This is for example the case if the process $\{X_t: t \in \mathbb{Z}\}$ fulfills the assumptions of Corollary \ref{Cor:spectralisBFTC} and is in addition a time reversible Markov chain. 

More generally, since the existence of a forward tail process ensures joint regular variation of $(X_0, X_1)$ (cf.\ Corollary 3.2 in \cite{BS09}), the resulting limiting spectral measure of the $2d$-dimensional vector $(X_{0,1}, \ldots, X_{0,d}, X_{1,1}, \ldots, X_{1,d})$ and the law of $(M_0,M_1)$ uniquely determine each other. Therefore, the backward tail chain is equal in distribution to the forward tail chain if and only if the spectral measure of $(X_{0,1}, \ldots, X_{0,d}, X_{1,1}, \ldots, X_{1,d})$ is equal to the spectral measure of $(X_{1,1}, \ldots, X_{1,d}, X_{0,1}, \ldots, X_{0,d})$. For $d=1$ this simply means that the spectral measure of $(X_0,X_1)$ is symmetric.    
\end{rem}

In the univariate case, BFTCs have an additional structure which generalizes a multiplicative  random walk in that the distribution of the increment depends on the sign of the process in its current state \citep{S07}. The random walk structure of the forward tail chain was first observed in \cite{Smith92} for one-sided extremes and extended to allow for both positive and negative extremes in \cite{BC03}.

\section{Examples for BFTCs}
\label{S:examples}

We conclude the paper with some examples of BFTCs for multivariate Markov processes. For univariate examples, see \citet[][section~7]{S07}.

\begin{ex}
\label{Ex:KestenRDE}
Let $(A_t, B_t), t \in \mathbb{Z},$ be i.i.d.\ with $A_t \in \mathbb{R}^{d \times d}$ and $B_t \in \mathbb{R}^d$. The stationary distribution and asymptotic behavior of the corresponding random difference equation 
\begin{equation}\label{RDE} X_{t}=A_t X_{t-1} + B_t, \;\;\; t \in \mathbb{Z},\end{equation}
have been studied initially in the seminal work by \cite{Kesten73}. Let us assume that the distribution of $(A_t, B_t)$ satisfies the technical, but mild assumptions of Theorems~A and~B or Theorem 6 in \cite{Kesten73} (where the first two theorems deal with the nonnegative case, i.e.\ all components of $A_t, t \in \mathbb{Z},$ are nonnegative almost surely, and the last one treats the general case). Together with results in \cite{BL09} this implies that the stationary distribution of $X_t$ for \eqref{RDE} is multivariate regularly varying in the nonnegative case. In the general case, multivariate regular variation follows if $\kappa_1>0$ in \cite{Kesten73}, Equation~(4.8), is not an integer, cf.\ \cite{BDM02a}. Let $\Upsilon$ denote the spectral measure and $\alpha>0$ the index of regular variation of the stationary distribution of $X_t$. It can be shown that
\begin{equation}\label{RDEequation} \EV\left[f\left(\frac{AC}{\| AC\|} \right)\|AC\|^\alpha \right]=\EV[f(C)] \end{equation}
for all bounded, continuous funtions $f$ on $\mathbb{S}^{d-1}$, where $C \in \mathbb{S}^{d-1}$ has distribution $\Upsilon$ and $A \in \mathbb{R}^{d \times d}$ is independent of $C$ with $\mathcal{L}(A)=\mathcal{L}(A_1)$, cf.\ \cite{BS09}. 

Due to the linear structure of \eqref{RDE}, Theorem~\ref{T:forward} applies with $P(Y>y)=y^{-\alpha}, y>1, \mathcal{L}(M_0)=\Upsilon$ and $\phi(M_{j-1}, \epsilon_j)=\epsilon_j M_{j-1}$ where the $\epsilon_j \in \mathbb{R}^{d \times d}, j=1,2,\ldots,$ are i.i.d.\ with $\mathcal{L}(\epsilon_j)=\mathcal{L}(A_1)$. In order to find the distribution of the backward tail chain note that Remark~\ref{partsimple} applies to this example by equation~\eqref{RDEequation}. So the law $P^\ast$ of $(M_{0},M_{-1})$ is given by
\begin{equation}\label{simpleRDEadjoint}P^\ast(E)=\EV\left[ \1_\EV\left(\frac{AC}{\|AC\|},\frac{C}{\|AC\|} \right)\|AC\|^\alpha \right]\end{equation}
for all Borel sets $E \subset \mathbb{S}^{d-1}\times \mathbb{R}^d$.

Additional assumptions about $\mathcal{L}(A)$ allow us to simplify this characterization: Let us assume that $A$ has a multiplicative form like in Example~\ref{Ex:mult:1}, i.e.\ $A=RQ$ for a positive random variable $R$ with $\EV[R^\alpha]=1$ and $Q$ is an orthogonal matrix independent of $R$. We may additionally assume that $R$ has a density on $\mathbb{R}_+$ and that the support of the law of $Q$ is equal to the orthogonal group in dimension $d$. In this case, the spectral measure $\Upsilon$ is the uniform distribution on $\mathbb{S}^{d-1}$ (cf.\ \cite{Bura09}, p.\ 390), $\alpha>0$ is the index of regular variation and
 $$\EV\left[f\left(\frac{AC}{\| AC\|} \right)\|AC\|^\alpha \right]=\EV\left[f\left(QC \right)R^\alpha \right]=\EV\left[f\left(QC \right)\right]\EV[R^\alpha]=\EV[f(C)]$$ 
holds for all bounded, continuous functions $f$ on $\mathbb{S}^{d-1}$ with $C \sim \mbox{Unif}(\mathbb{S}^{d-1})$. Since $\mathcal{L}(C)=\mathcal{L}(QC)$, all assumptions of Example~\ref{Ex:mult:1} are met and the adjoint measure $P^\ast$ is determined by \eqref{E:Ex:mult:1} and equal to the law of $(C^\ast ,R^\ast Q^\ast C^\ast)$ with $R^\ast, Q^\ast, C^\ast$ independent, $\mathcal{L}(C^\ast)=\mbox{Unif}(\mathbb{S}^{d-1})$, $\mathcal{L}(Q^\ast)=\mathcal{L}(Q')$ and $R^\ast$ has density $f_{R^\ast}(y)=f_R(y^{-1})y^{-(2+\alpha)},$ $y>0$, where $f_R$ denotes the density of $R$. Thus, both the forward and the backward tail chain have a simple multiplicative structure:
\[ 
  M_t=M_0A_1 \cdot \ldots \cdot A_t, \qquad 
  M_{-t}=M_0A_{-1}\cdot \ldots \cdot A_{-t}, 
  \qquad t\geq 1, 
\]
with $A_1, A_2, \ldots$ as above and $A_{-1}, A_{-2}, \ldots$ i.i.d. with the same distribution as $R^\ast Q^\ast $, all independent of each other and of $M_0\sim\mbox{Unif}(\mathbb{S}^{d-1})$.
\end{ex}

\begin{ex}
\label{heavy-tailedRDE}
While the preceding example dealt with random difference equations where the random increment $B_t$ has a relatively light tail [\cite{Kesten73} assumes that $E(\|B_1\|^\alpha)<\infty$], the following example deals with AR(1) processes where the innovations themselve are regularly varying. Let 
\begin{equation}
\label{E:StochDiff:2}
	X_t = A X_{t-1} + B_t, \qquad t \in \mathbb{Z},
\end{equation}
where $A$ is a deterministic $\mathbb{R}^{d \times d}$-matrix and $B_t \in \mathbb{R}^d$, $t \in \mathbb{Z}$, are i.i.d.\ and multivariate regularly varying with index $\alpha>0$ and spectral measure $\lambda$ on $\mathbb{S}^{d-1}$. For extensions to random but light-tailed random matrices $A_t$, see for instance \citet{HuSa08}.

If $\sup_{x \in \mathbb{S}^{d-1}}\|A^m x\|<1$ for some positive integer $m$, then \eqref{E:StochDiff:2} has the stationary solution
\[ 
  X_t=\sum_{n=0}^\infty A^n B_{t-n}, \qquad t \in \mathbb{Z}.
\]
It has been shown in \cite{MeSe10} that in this case the stationary distribution of $X_t$ is multivariate regularly varying as well, with the same index $\alpha$ and spectral measure $\Upsilon=\sum_{n=0}^\infty p_n \lambda_n$, where
$$ p_n:= \frac{c_n}{\sum_{k=0}^\infty c_k} \;\;\; \text{with} \;\; c_n:=\int_{\mathbb{S}^{d-1}}\|A^n \theta\|\, \lambda (\mathrm{d} \theta), \;\;\; n \in \mathbb{N}_0,$$
and where $\lambda_n$ is the spectral measure of $A^n B$, provided $c_n > 0$, i.e.
\[
  \lambda_n(f)
  :=
  \frac{1}{c_n}
  \int_{\mathbb{S}^{d-1}}
    f \left( \frac{A^n s}{\|A^n s\|} \right) \, \|A^n s\|^\alpha \, 
  \lambda (\mathrm{d} s),
  \;\;\; n \in \mathbb{N}_0, \;\; \mbox{if }c_n>0,
\]
for all bounded, continuous functions $f$ on $\mathbb{S}^{d-1}$ \citep[][Example~9.3]{MeSe10}. The spectral process $\{ M_t:t \in \mathbb{Z}\}$ in Proposition~\ref{P:BS09:2.1} is of the form
\begin{equation}
\label{E:AR(1)tailproc}
  M_{-N+t}=\begin{cases}
            A^t \Theta, \;\;\; & t=0, 1, 2, \ldots ,\\
            0, & t=-1,-2, \ldots
           \end{cases}
\end{equation}
for a random integer $N$ with $\Pr(N=n)=p_n$, $n \in \mathbb{N}_0$, and a random vector $\Theta$ with distribution
\[ 
  \Pr(\Theta \in E \mid N=n)
  = \frac{1}{c_n}
  \int_{\mathbb{S}^{d-1}}
    \1_{E} (s / \|A^n s\|) \, \|A^n s\|^\alpha \,
  \lambda (\mathrm{d} s)
\] 
for $n \in \mathbb{N}_0$ and Borel sets $E \in \mathbb{R}^d$. Here, the forward tail chain has a deterministic multiplicative structure with $M_0 \sim \Upsilon$ and $M_n=AM_{n-1}$ for $n\geq 1$. The backward process is Markovian as well, by Corollary~\ref{Cor:spectralisBFTC}. This is also clear if one looks at \eqref{E:AR(1)tailproc} and notices that $M_{-(n+h)}=0$ if $M_{-n}=0$ for all $h \geq 1, n \geq 1$. Furthermore, if $M_{-n} \neq 0$ then $(M_{-n+1}, \ldots, M_0)=(AM_{-n}, \ldots, A^n M_{-n})$ contains no more information about $M_{-(n+1)}$ than $M_{-n}$ does.

The distribution of $(M_0, M_{-1})$ is adjoint to the one of $(M_0, M_1) = (M_0, AM_0)$. By \eqref{eq:adjoint:E} and since $M_0 \sim \Upsilon$, we find, for every Borel set $E \subset \mathbb{S}^{d-1} \times (\mathbb{R}^d \setminus \{ 0 \})$,
\begin{align*}
  \Pr\bigl( (M_0, M_{-1}) \in E \bigr)
  &= \EV 
  \biggl[ 
    \1_E \biggl( \frac{M_1}{\|M_1\|}, \frac{M_0}{\|M_1\|} \biggr) \, \|M_1\|^\alpha 
  \biggr] \\
  &= \frac{1}{\sum_{k = 0}^\infty c_k}
  \sum_{n \ge 0} 
  \int_{\mathbb{S}^{d-1}}
    \1_E \biggl( \frac{A^{n+1}s}{\|A^{n+1}s\|}, \, \frac{A^n s}{\|A^{n+1} s\|} \biggr) \,
    \| A^{n+1} s \|^\alpha \,
  \lambda( \mathrm{d}s ).
\end{align*}
Choosing $E = S \times (\mathbb{R}^d \setminus \{ 0 \})$ for a Borel set $S \subset \mathbb{S}^{d-1}$ yields, upon taking complements with respect to $\{ M_0 \in S \}$ and noting that $\| s \| = 1$ for $s \in \mathbb{S}^{d-1}$,
\begin{equation}
\label{E:M-1equal0}
  \Pr( M_0 \in S, \, M_{-1} = 0 )
  = \frac{1}{\sum_{k = 0}^\infty c_k} \, \lambda(S).
\end{equation}
In particular, $\Pr(M_{-1} = 0) = p_0 = \Pr(N = 0)$. The backward tail chain now follows from Definition~\ref{D:BFTC}(iii) together with the distribution of $(M_0, M_{-1})$.

In the special case that $A$ is invertible, we find from \eqref{E:AR(1)tailproc} that $M_{-(t+1)}$ is equal to either $A^{-1} M_{-t}$ or to $0$ with conditional probabilities depending on $M_{-t}/\|M_{-t}\|$: if $M_{-t} = 0$, then $M_{-(t+1)} = 0$ too, while if $M_{-t} = x \ne 0$, then
\[
  M_{-(t+1)} =
  \begin{cases}
    A^{-1} x & \text{with probability $1 - \Pr(M_{-1} = 0 \mid M_0 = x / \norm{x})$,} \\
    0 & \text{with probability $\Pr(M_{-1} = 0 \mid M_0 = x / \norm{x})$.}
  \end{cases}
\]
To derive a concrete form of the backward Markov kernel, let us assume that $\lambda$ has a Lebesgue density $f_\lambda$ on $\mathbb{S}^{d-1}$. Then all measures $\lambda_n$ and thus $\Upsilon$ have Lebesgue densities as well and \eqref{E:M-1equal0} gives us
\[
  \Pr(M_{-1} = 0 \mid M_0 = s)
  = \frac{1}{\sum_{k = 0}^\infty c_k} \frac{f_\lambda(s)}{f_\Upsilon(s)}
\]
for all $s \in \mathbb{S}^{d-1}$ such that $f_\Upsilon(s) > 0$.

\end{ex}

\section*{Acknowledgement} 
The authors thank Richard Davis and Holger Drees for helpful discussions. Furthermore, they wish to thank the organisers, especially Paul Doukhan, of the workshop ``Extremes and risk management`` which took place during September 2012 at the university of Cergy-Pontoise. Anja Jan\ss en was supported by DFG (DFG project JA 2160/1-1). Johan Segers was supported by contract ``Projet d'Actions de Recherche Concert\'ees'' No.\ 12/17-045 of the ``Communaut\'e fran\c{c}aise de Belgique'' and by IAP research network grant No.\ P7/06 of the Belgian government (Belgian Science Policy).

The authors would like to thank the anonymous referee for helpful comments and suggestions. 


\end{document}